\documentclass[11pt]{article}

\usepackage[margin=1in]{geometry}
\usepackage{amsmath,amssymb,amsthm,mathtools}
\usepackage{enumitem}
\usepackage{microtype}
\usepackage[colorlinks=true,linkcolor=blue,citecolor=blue,urlcolor=blue]{hyperref}
\usepackage{tikz}

\numberwithin{equation}{section}

\newtheorem{theorem}{Theorem}[section]
\newtheorem{lemma}[theorem]{Lemma}
\newtheorem{proposition}[theorem]{Proposition}
\newtheorem{corollary}[theorem]{Corollary}
\newtheorem{remark}[theorem]{Remark}

\newcommand{\E}{\mathbb E}
\newcommand{\Pp}{\mathbb P}
\newcommand{\Var}{\operatorname{Var}}

\newcommand{\dist}{\operatorname{d}}

\title{Nonconcentration of hitting times for random walks on graphs}
\author{Rafael Chiclana}
\date{}

\begin{document}
\maketitle

\begin{abstract}
	We study nonconcentration of hitting times for simple random walk on finite
	graphs. We prove that, for every connected graph with $n$ vertices,
	\[
	\operatorname{Var}_x(\tau_y)+\mathbb E_x\tau_y
	\ge
	\frac{(\mathbb E_x\tau_y)^2}{1+\log n},
	\]
	with the logarithmic term sharp up to constants.  Under a bounded-degree
	assumption the additive mean term can be removed, giving a variance lower bound
	depending only on \(\mathbb E_x\tau_y\) and the graph distance \(\dist(x,y)\).  We show that this
	degree assumption is necessary by constructing high-degree graphs with
	linear mean and bounded variance; the same construction disproves a conjecture of Norris--Peres--Zhai concerning local nonconcentration of hitting times. We also prove a sharper tree estimate, extend the main argument to finite reversible Markov chains, and show that Holroyd's interval conjecture, stated in Norris--Peres--Zhai, fails even for bounded-degree trees.
\end{abstract}

\section{Introduction and main results}

Let \(G=(V,E)\) be a graph, where \(V\) is the set of vertices and \(E\) is the
set of edges.  We say that \(G\) is connected if there is a path between every
pair of vertices, and simple if it has no loops or repeated edges. Throughout
this paper, all graphs are assumed to be finite, connected, and simple.

A simple random walk on \(G\) is the Markov chain \((X_t)_{t\ge0}\) with state
space \(V\) which, at each step, moves from its current vertex to a uniformly
chosen neighboring vertex. When the walk starts from \(x\), we write \(\mathbb P_x\), \(\mathbb E_x\), and
\(\operatorname{Var}_x\) for the corresponding probability, expectation, and
variance. Given a vertex \(y\in V\), the hitting time of \(y\)
is
\[
\tau_y:=\inf\{t\ge0:X_t=y\}.
\]
Hitting times are one of the central objects in the study of random walks on
graphs. They measure how long it takes for the walk to reach a prescribed target
vertex, and they appear naturally in many areas, including network analysis,
theoretical computer science, interacting particle systems, and mathematical
finance; see, for instance,
\cite{Avrachenkov2018hitting,Balka2009review,nadtochiy2019particle}.

Recently, it has been shown that hitting times exhibit nonconcentration properties (see \cite{gurel2013nonconcentration,norris2017surprise}). In this paper, we establish lower bounds for the variance $\operatorname{Var}_x(\tau_y)$ in terms of \(\mathbb E_x\tau_y\) and basic geometric parameters of the graph. Our main result is the following general inequality.

\begin{theorem}\label{thm:general-plus-mean}
	Let \(G\) be a graph with \(n\) vertices. Then, for every
	\(x,y\in V\),
	\begin{equation}\label{eq:main}
		\operatorname{Var}_x(\tau_y)+\mathbb E_x\tau_y
		\ge
		\frac{(\mathbb E_x\tau_y)^2}{1+\log n}.
	\end{equation}
\end{theorem}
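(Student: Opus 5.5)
The plan is to rewrite both sides of \eqref{eq:main} in terms of the potential \(h(z):=\mathbb E_z\tau_y\) and the Green function \(G(x,z):=\mathbb E_x\sum_{t<\tau_y}\mathbf 1\{X_t=z\}\) of the walk killed at \(y\). Then \(h(x)=\sum_{z}G(x,z)\), and I would reduce the theorem to a weighted Cauchy--Schwarz inequality plus a logarithmic potential estimate. Nothing earlier in the paper is needed beyond the definitions. I have not verified the constant \(1+\log n\), nor that the weight chosen below satisfies the needed estimate; this is the step I expect to be the main obstacle.

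\emph{Step 1 (martingale identity for the variance).} The function \(h\) satisfies \(h(z)=1+\sum_{w\sim z}h(w)/\deg z\) for \(z\neq y\), with \(h(y)=0\). Hence \(M_t:=h(X_{t\wedge\tau_y})+t\wedge\tau_y\) is a martingale with \(M_0=h(x)\) and \(M_{\tau_y}=\tau_y\). Orthogonality of martingale increments gives
\[
\operatorname{Var}_x(\tau_y)=\mathbb E_x\sum_{t<\tau_y}\sigma^2(X_t)=\sum_{z\neq y}G(x,z)\,\sigma^2(z),
\]
where \(\sigma^2(z)\) is the variance of \(h(X_1)\) under \(\mathbb P_z\). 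Writing \(\mathbb E_x\tau_y=\sum_z G(x,z)\), the left side of \eqref{eq:main} becomes \(\sum_z G(x,z)\,(1+\sigma^2(z))\).

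\emph{Step 2 (Cauchy--Schwarz).} For any positive weight \(w\),
\[
h(x)^2=\Bigl(\sum_z G(x,z)\Bigr)^2\le\Bigl(\sum_z G(x,z)\,w(z)\Bigr)\Bigl(\sum_z \frac{G(x,z)}{w(z)}\Bigr).
\]
It therefore suffices to find a weight with \(w\le 1+\sigma^2\) on the support of \(G(x,\cdot)\) and \(\sum_z G(x,z)/w(z)\le 1+\log n\).

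The naive choice \(w=1+\sigma^2\) fails already on a path. Walking from the far end of a path of length \(n\), we have \(G(x,z)\approx n\) and \(\sigma^2(z)\approx z^2\), so \(\sum_z G(x,z)/(1+\sigma^2(z))\) is of order \(n\). The weight must therefore be localized or rescaled.

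\emph{Step 3 (the logarithmic bound).} To get the \(\log n\), I would apply the same martingale/Taylor bookkeeping to a concave function of the potential, namely \(\Phi_t:=\log\bigl(1+h(X_{t\wedge\tau_y})\bigr)\). This process starts at \(\log(1+h(x))\) and ends at \(0\). Its expected one-step decrease at \(z\) is, to second order, at least \(\frac{1}{1+h(z)}\) plus a term comparable to \(\frac{\sigma^2(z)}{(1+h(z))^2}\). Summing over the trajectory gives
\[
\sum_z \frac{G(x,z)}{1+h(z)}\lesssim\log(1+h(x)),
\]
together with a matching control of \(\sum_z G(x,z)\,\sigma^2(z)/(1+h(z))^2\). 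This would be combined with a second Cauchy--Schwarz, pairing \(h(x)\) against weights built from \(1+h(z)\) and \(1+\sigma^2(z)\).

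\emph{Replacing \(\log h(x)\) by \(\log n\).} This is where the count of \(n\) vertices must enter, and I would try two things. The first is to use commute-time bounds to show that only levels \(h\in[h(x)/n,\,h(x)]\) carry the relevant mass: the walk visits at most \(n\) states, and each state \(z\) can absorb at most \(G(z,z)\) time. The second is to split the trajectory at the successive first times the potential halves, and bound the number of dyadic scales that matter by \(\log n\).

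The hardest part will be obtaining exactly \(1+\log n\), rather than \(C\log n\), from this level-counting argument.
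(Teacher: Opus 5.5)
Your Step 1 is correct; it is exactly the paper's martingale variance identity. The gap is in Step 2: its reduction cannot be completed, and your own path example already shows why. The constraint $w\le 1+\sigma^2$ forces $1/w\ge 1/(1+\sigma^2)$ pointwise, so no "localized or rescaled" weight can do better than $w=1+\sigma^2$.

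On a path with $x$ at an endpoint, $\sigma^2(x)=0$ because the first step is deterministic, and $G(x,x)=\deg(x)R_{\mathrm{eff}}(x,y)=\dist(x,y)$. Hence $\sum_z G(x,z)/w(z)\ge \dist(x,y)$, which is of order $n$, for every admissible $w$. A Cauchy--Schwarz applied to $\E_x\tau_y=\sum_z G(x,z)\cdot 1$ vertex by vertex is therefore intrinsically too lossy.

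Step 3 does not repair this. Its "second Cauchy--Schwarz" is unspecified, and any vertexwise pairing whose variance factor is dominated termwise by $\sum_z G(x,z)(1+\sigma^2(z))$ hits the same obstruction. It also puts the logarithm on $h$, and $h(x)$ can be of order $n^3$. That is not the quantity that produces $1+\log n$, which is why you end up needing the speculative level-counting at the end.

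The missing idea is to pass from vertices to edges \emph{before} applying Cauchy--Schwarz. Write $1=\E_z[h(z)-h(X_1)]$ and $1+\sigma^2(z)=\E_z[(h(z)-h(X_1))^2]$, and set $g(z):=G(x,z)/\deg(z)$. Grouping the two orientations of each edge turns $h(x)=\sum_z G(x,z)$ and your Step 1 into
\[
\E_x\tau_y=\sum_{\{u,v\}\in E}(g(u)-g(v))(h(u)-h(v)),\qquad \Var_x(\tau_y)+\E_x\tau_y=\sum_{\{u,v\}\in E}(g(u)+g(v))(h(u)-h(v))^2 .
\]
The first identity exploits cancellation between opposite orientations. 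Equivalently, $\theta_g(u,v)=g(u)-g(v)$ is the unit current from $x$ to $y$, and the identity is summation by parts against it.

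Cauchy--Schwarz over unordered edges then leaves the factor $\sum_{\{u,v\}}(g(u)-g(v))^2/(g(u)+g(v))$. Here the logarithm enters through the voltage $g$, not through $h$. Use the inequality $(a-b)^2/(a+b)\le\frac12(a-b)(\log a-\log b)$, then sum $\log g$ by parts against $\theta_g$. This bounds the factor by $1+\frac12\log g(x)+\frac12 H$, where:
\begin{itemize}
\item the $1$ is the contribution of the edges into $y$;
\item $g(x)=R_{\mathrm{eff}}(x,y)\le\dist(x,y)\le n$;
\item $H\le\log\deg(y)\le\log n$ is the entropy of the exit distribution $(g(u))_{u\sim y}$.
\end{itemize}
This gives exactly $1+\log n$, with no level-counting or commute-time argument needed.
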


A family of trees constructed by Norris, Peres, and Zhai in the proof of \cite[Claim 2.2]{norris2017surprise}, with expectation and variance recorded in \eqref{eq:peres}, shows that the logarithmic term in \eqref{eq:main} is sharp up to constants.

We write \(\deg(v)\) for the degree of a vertex \(v\), and \(\dist(x,y)\) for
the graph distance between \(x\) and \(y\).

\begin{remark}\label{rem:remarkmain}
The parameter \(n\) in Theorem~\ref{thm:general-plus-mean} is only a coarse
global bound.  The same argument gives the following localized estimate: for
\(x\neq y\),
\begin{equation}\label{eq:mainbounded}
	\Var_x(\tau_y)+\E_x\tau_y
	\ge
	\frac{2(\E_x\tau_y)^2}
	{2+\log \dist(x,y)+\log\deg(y)}.
\end{equation}
This implies \eqref{eq:main}, since \(\dist(x,y)\le n\) and \(\deg(y)\le n\).
\end{remark}

For bounded-degree graphs, the variance itself dominates the mean.  Thus, from Remark \ref{rem:remarkmain} we deduce a genuine variance lower bound.

\begin{corollary}\label{thm:bounded-degree}
	Let \(G\) be a graph with maximum degree at most \(D\). Assume
	that \(\tau_y\) is not deterministic. Then
	\[
	\operatorname{Var}_x(\tau_y)
	\ge
	c_D
	\frac{(\mathbb E_x\tau_y)^2}{\log(\dist(x,y)+1)},
	\qquad \mbox{where } c_D=\frac{1}{6D(2+\log D)}.
	\]
\end{corollary}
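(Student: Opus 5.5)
The plan is to combine the localized estimate \eqref{eq:mainbounded} of Remark~\ref{rem:remarkmain} with a separate lemma saying that, under the degree bound, the variance controls the mean:
\begin{equation*}
\Var_x(\tau_y)\ \ge\ \frac{\E_x\tau_y}{2D}\qquad\text{whenever }\tau_y\text{ is not deterministic under }\Pp_x .
\end{equation*}
Granting this lemma, the rest is bookkeeping. Since $\tau_y$ is not deterministic we have $x\neq y$, so $d:=\dist(x,y)\ge1$. The lemma gives $\Var_x(\tau_y)+\E_x\tau_y\le(1+2D)\Var_x(\tau_y)\le 3D\,\Var_x(\tau_y)$. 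Using $\deg(y)\le D$ and $d\ge1$,
\[
2+\log d+\log\deg(y)\le(2+\log D)(1+\log d)\le(2+\log D)\,\frac{\log(d+1)}{\log 2},
\]
where the last inequality is $1+\log d\le\log(d+1)/\log 2$, valid for $d\ge1$. Inserting both bounds into \eqref{eq:mainbounded} gives
\[
\Var_x(\tau_y)\ge\frac{2\log 2}{3D(2+\log D)}\cdot\frac{(\E_x\tau_y)^2}{\log(d+1)}.
\]
Since $2\log 2/3\ge 1/6$, this yields the constant $c_D$, with room to spare.

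For the lemma I would use the Doob martingale of $\tau_y$. Put $h(v)=\E_v\tau_y$ and $M_t=h(X_{t\wedge\tau_y})+t\wedge\tau_y$. Then $M$ is a martingale with $M_0=h(x)$ and $M_{\tau_y}=\tau_y$, so orthogonality of increments gives
\[
\Var_x(\tau_y)=\E_x\sum_{t<\tau_y}\sigma^2(X_t),\qquad \sigma^2(v)=\frac1{\deg v}\sum_{w\sim v}\bigl(h(w)-h(v)+1\bigr)^2 ,
\]
where $\sigma^2(v)$ is the conditional variance of $h(X_{t+1})$ given $X_t=v$. The harmonic relation $h(v)=1+\frac1{\deg v}\sum_{w\sim v}h(w)$ says that the increments $h(w)-h(v)+1$ average to zero. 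Hence $\sigma^2(v)=0$ exactly when every neighbour $w$ satisfies $h(w)=h(v)-1$. Call such a vertex \emph{rigid}: from a rigid vertex the walk moves deterministically one level down in $h$. Since $\E_x\tau_y=\E_x\sum_{t<\tau_y}1$, the lemma reduces to comparing the expected time spent at non-rigid vertices, weighted by $\sigma^2$, with the total expected time.

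The main obstacle is this comparison. I would try two steps. First, show that every non-rigid $v$ satisfies $\sigma^2(v)\ge c/D$. The idea is that some neighbour $u$ has $h(u)\ge h(v)$: at a non-rigid vertex some increment is nonzero and they average to zero, and for bounded-degree graphs one hopes to extract a neighbour lying at least one level up, for instance the parent on a geodesic towards $x$, using $h(u)\ge h(v)-1$ together with the mean-zero constraint. Such a neighbour contributes $(h(u)-h(v)+1)^2/\deg v\ge 1/D$. Second, show that the walk cannot spend most of its time at rigid vertices. Each maximal run through rigid vertices strictly decreases $h$ by one per step, so it ends within $h$-steps either at $y$ or at a non-rigid vertex. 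Nondeterminism forces a non-rigid vertex to be visited with positive probability. I would then charge each rigid step to the next visit to a non-rigid vertex, or argue via $h(X_t)$ directly, aiming to show that the expected number of rigid steps is at most a constant multiple of the $\sigma^2$-weighted time at non-rigid vertices plus that number again. This charging argument is where I expect the real difficulty, and hence where the factor $D$ in $c_D$ comes from.
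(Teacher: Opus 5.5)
Your reduction is the same as the paper's: a linear bound $\Var_x(\tau_y)\gtrsim \E_x\tau_y/D$ combined with \eqref{eq:mainbounded}. But that linear lemma is the entire content of the corollary, and your plan for it does not work. Step one is false. A non-rigid vertex need not have any neighbour with $h(u)\ge h(v)$, and the inequality $h(u)\ge h(v)-1$ fails for general neighbours (take $u=y$). Worse, the conditional variance at a non-rigid vertex can be arbitrarily small even with $D=3$. For $k\ge1$, let $v$ have exactly two neighbours $w_1,w_2$, both adjacent to $y$. Join $w_1$ to $w_2$ by a path of length $2k+2$, and hang a leaf on the path vertex at distance $k$ from $w_1$. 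Solving the equations for $h$ gives $h(w_1)-h(w_2)=2/(2k+3)$. So the increments at $v$ are $\pm(2k+3)^{-1}$ and $\sigma^2(v)=(2k+3)^{-2}\to0$, although $v$ is not rigid. Hence no vertex-by-vertex bound $\sigma^2\ge c/D$ on non-rigid vertices exists. The charging step, which you yourself flag as the real difficulty, is also left undone.

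The missing idea is to bound \emph{pairs of consecutive vertices} rather than single vertices. Let $u,v\neq y$ be adjacent and set $a=h(u)-h(v)$. The edge $\{u,v\}$ contributes $(1-a)^2/\deg u$ to $\sigma^2(u)$ and $(1+a)^2/\deg v$ to $\sigma^2(v)$, so
\[
\sigma^2(u)+\sigma^2(v)\ \ge\ \frac{(1-a)^2+(1+a)^2}{D}\ \ge\ \frac{2}{D}.
\]
In your language, two adjacent vertices other than $y$ are never both rigid. So rigid runs have length one and no charging is needed. In the example above, the tiny $\sigma^2(v)$ is compensated at $w_1,w_2$, where $\sigma^2(w_i)\ge1$.

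Apply the pair bound to $(X_t,X_{t+1})$ for $t\le\tau_y-2$. The martingale identity then gives
\[
\Var_x(\tau_y)\ \ge\ \frac12\E_x\sum_{t\le\tau_y-2}\bigl(\sigma^2(X_t)+\sigma^2(X_{t+1})\bigr)\ \ge\ \frac{\E_x\tau_y-1}{D}.
\]
Non-determinism means $x$ has a neighbour other than $y$, so $\Pp_x(\tau_y\ge2)\ge1/2$ and $\E_x\tau_y\ge3/2$. This gives $\Var_x(\tau_y)\ge\E_x\tau_y/(3D)$. That is weaker than your claimed $1/(2D)$, but it suffices.

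There is also a bookkeeping error. The inequality $1+\log d\le\log(d+1)/\log 2$ is false for $2\le d\le 5$; at $d=2$, $1+\log 2\approx1.69$ while $\log 3/\log 2\approx1.58$. Use instead
\[
2+\log d+\log\deg(y)\ \le\ (2+\log D)\bigl(1+\log(d+1)\bigr)\ \le\ 3(2+\log D)\log(d+1).
\]
Together with $(1+3D)\le 4D$, this yields exactly $c_D$.
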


The bounded-degree assumption cannot be removed. In section \ref{sec:funnel}, we construct a family of high-degree graphs for which the variance lower bound fails.

\begin{proposition}\label{prop:funnel}
	For every \(L\ge2\), there exists a graph
	\(G_L=(V_L,E_L)\) and vertices \(x,y\in V_L\) such that
	\[
	\mathbb E_x\tau_y\asymp L,
	\qquad
	\operatorname{Var}_x(\tau_y)=O(1),
	\qquad
	\log |V_L|\asymp L\log L.
	\]
	Consequently,
	\[
	\frac{(\mathbb E_x\tau_y)^2}{1+\log |V_L|}
	\asymp
	\frac{L}{\log L}
	\to\infty,
	\qquad
	\operatorname{Var}_x(\tau_y)=O(1).
	\]
\end{proposition}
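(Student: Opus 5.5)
The plan is to build $G_L$ as a ``funnel'': a regular tree that branches very fast away from $x$, with all of its leaves glued to the single target $y$. Let $\mathcal T_L$ be the rooted tree of depth $L-1$ with root $x$ in which every non-leaf vertex has exactly $L$ children. Then add one new vertex $y$ adjacent to every leaf (every vertex at depth $L-1$). The graph is simple and connected, and
\[
|V_L| = 1+\sum_{k=0}^{L-1}L^k \asymp L^{L-1},
\]
so $\log|V_L|\asymp L\log L$. The huge degree of $y$ is exactly what Remark~\ref{rem:remarkmain} and Corollary~\ref{thm:bounded-degree} say must be present.

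The first step is to reduce to a one-dimensional chain. By the symmetry of the construction, the level process $Z_t := \dist(x,X_t)$ (with $Z=L$ meaning the walk is at $y$) is itself a Markov chain on $\{0,1,\dots,L\}$, and $\tau_y$ is the hitting time of $L$ by $Z$ started from $0$. Its transition probabilities are:
\begin{itemize}[nosep]
\item from $0$ it moves to $1$ with probability $1$;
\item from $1\le k\le L-2$ it moves forward with probability $p:=L/(L+1)$ and backward with probability $q:=1/(L+1)$;
\item from $L-1$ it moves to $L$ or to $L-2$ with probability $1/2$ each.
\end{itemize}
By the strong Markov property, $\tau_y=\sum_{k=0}^{L-1}T_k$, where $T_k$ is the passage time from $k$ to $k+1$ and the $T_k$ are independent. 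Hence
\[
\E_x\tau_y=\sum_k \E T_k,\qquad \Var_x(\tau_y)=\sum_k\Var T_k .
\]

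The second step is the level-by-level recursion. For $1\le k\le L-2$ one has the distributional identity
\[
T_k \overset{d}{=} 1 + \mathbf 1_{B}\,\bigl(T_{k-1}'+T_k'\bigr),
\]
where $B$ is the event of a backward first step ($\Pp(B)=q$) and $T_{k-1}',T_k'$ are independent copies of $T_{k-1},T_k$, independent of $B$. This gives
\[
\E T_k=\frac{1+q\,\E T_{k-1}}{1-q},
\]
and an analogous linear recursion for the second moment. Starting from $T_0\equiv1$, I will show by induction the following bounds:
\[
\E T_k\le 1+\frac{3}{L},\qquad \E T_k^2\le 1+\frac{C}{L}\quad\text{for } k\le L-2 .
\]
This yields $\Var T_k=O(1/L)$, so these $L-1$ levels contribute $L+O(1)$ to the mean and $O(1)$ to the variance. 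The top level $T_{L-1}$ satisfies the same kind of identity with $q$ replaced by $1/2$. Plugging in the bounded first and second moments of $T_{L-2}$ gives $\E T_{L-1}=O(1)$ and $\Var T_{L-1}=O(1)$. Summing over all levels gives $\E_x\tau_y=L+O(1)\asymp L$ and $\Var_x(\tau_y)=O(1)$. The ``consequently'' statement then follows by direct substitution.

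The main obstacle is making the induction uniform in $k$. A crude bound on the second moment compounds across the $L$ levels: the coefficient of $\E T_{k-1}^2$ in the recursion is $q/(1-q)\approx 1/L$, and the cross terms must also be shown to be $O(1/L)$. I will handle this by first proving the uniform mean bound $\E T_k\le 2$ by induction, and then feeding it into the second-moment recursion. That recursion has the form
\[
a_k\le 1+\frac{C'}{L}+\frac{a_{k-1}}{L}.
\]
Its solution stays bounded by $1+2C'/L$ for all $k$, since the contraction factor $1/L$ prevents any accumulation.
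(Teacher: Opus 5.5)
Your proposal is correct and follows essentially the paper's route: the same funnel graph (a regular tree with all leaves joined to $y$), reduction to the level birth--death chain, independent level passage times controlled by a per-level first- and second-moment recursion, and a separate treatment of the top level, where the walk steps into $y$ with probability $1/2$. The differences are cosmetic. You take branching $B=L$, so the bulk levels contribute $\Theta(1)$ to the variance instead of the paper's $O(L/B)=o(1)$ with $B=L^4$, which is still bounded. You also use the implicit identity $T_k\overset{d}{=}1+\mathbf 1_B(T_{k-1}'+T_k')$ in place of the paper's explicit geometric compound sum.
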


The construction is also related to a conjecture of Norris, Peres, and Zhai
\cite[Conjecture~6.1]{norris2017surprise}. They proved in \cite{norris2017surprise} that, for simple random walk on an
\(n\)-vertex graph,
\[
\mathbb P_x(\tau_y=t)\le C\frac{\log n}{t},
\]
and conjectured that the logarithmic factor could be replaced by a square-root
logarithmic factor; that is, that one has
\[
\mathbb P_x(\tau_y=t)\le C\frac{\sqrt{\log n}}{t}
\]
for some universal constant $C>0$. By a standard Chebyshev argument, such an estimate
would imply a variance lower bound of the form
\[
\operatorname{Var}_x(\tau_y)
\ge
c\frac{(\mathbb E_x\tau_y)^2}{1+\log n}.
\]
Proposition~\ref{prop:funnel} shows that this consequence is false in general,
and hence disproves the unrestricted form of the conjecture.  Whether the
square-root logarithmic bound holds under a bounded-degree assumption remains
open; see Remark~\ref{rem:local-counterexample} for the direct local
calculation.

The mechanism behind Corollary~\ref{thm:bounded-degree} is not special to bounded
degree.  The additive mean term in \eqref{eq:main} can be removed whenever one
has an independent estimate of the form
\[
\operatorname{Var}_x(\tau_y)\ge c\,\mathbb E_x\tau_y.
\]
Trees provide an important example. Recall that a tree is a connected graph with no cycles. For trees, the variance dominates the mean without any bounded-degree assumption, and the logarithmic term depends only on the distance between the starting point and the target.

\begin{corollary}\label{theo:tree}
	Let \(T\) be a tree. Then, for any vertices \(x,y\in T\) such that \(\tau_y\) is
	not deterministic,
	\[
	\operatorname{Var}_x(\tau_y)
	\ge
	\frac{\log 4}{3}\,
	\frac{(\mathbb E_x\tau_y)^2}{\log(\dist(x,y)+1)}.
	\]
\end{corollary}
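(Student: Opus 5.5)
The plan is to derive Corollary~\ref{theo:tree} from the localized estimate \eqref{eq:mainbounded} of Remark~\ref{rem:remarkmain}, in three steps: remove the $\log\deg(y)$ term, prove a linear lower bound $\Var_x(\tau_y)\ge 2\,\E_x\tau_y$ on trees, and combine the two. We may assume $x\ne y$.

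\emph{Step 1: pruning.} Let $v_{d-1}$ be the neighbour of $y$ on the geodesic from $x$, where $d=\dist(x,y)$. Since $T$ is a tree, the walk started at $x$ cannot leave the component of $T\setminus\{y\}$ containing $x$ before time $\tau_y$. Let $T'$ be the subtree spanned by that component together with $y$. Then the law of $\tau_y$ under $\Pp_x$ is the same in $T$ and in $T'$. In $T'$ we have $\deg(y)=1$ and $\dist(x,y)=d$, so \eqref{eq:mainbounded} gives
\[
\Var_x(\tau_y)+\E_x\tau_y\ \ge\ \frac{2(\E_x\tau_y)^2}{2+\log d}\ \ge\ \frac{\log 2\,(\E_x\tau_y)^2}{\log(d+1)} .
\]
The last inequality is $2\log(d+1)\ge \log 2\,(2+\log d)$, i.e.\ $(d+1)^2\ge 4d$, which always holds.

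\emph{Step 2: $\Var_x(\tau_y)\ge 2\,\E_x\tau_y$ when $\tau_y$ is not deterministic.} Write $x=v_0,\dots,v_d=y$ for the geodesic and $\tau_y=\sum_{i=1}^d T_i$, where $T_i$ is the passage time from $v_{i-1}$ to $v_i$. By the tree structure and the strong Markov property the $T_i$ are independent, so both the mean and the variance split as sums over $i$.

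For a single edge, let $B$ be the branch containing $v_{i-1}$ after deleting the edge $\{v_{i-1},v_i\}$. A first-step decomposition expresses $T_i$ as $1$ plus a geometric number of independent excursions into $B$. This yields a recursion for $\E T_i$ and $\Var T_i$ over the subtrees hanging off $v_{i-1}$; the mean is $\E T_i=2|E(B)|+1$. By induction on $|E(B)|$ I would prove
\[
\Var T_i\ \ge\ 2\,\E T_i-2 \qquad\text{for every edge,}
\]
with equality when $B$ is a single vertex, i.e.\ $T_i\equiv1$. I would also prove the stronger bound $\Var T_i\ge 2\,\E T_i$ as soon as $B$ is not a single vertex. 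The equality case is the path $x-v-y$: there $\E=4$ and $\Var=8$, which already shows the constant $2$ is sharp.

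Summing over $i$, the only possible deficit comes from edges with $T_i\equiv1$. Since the branches are nested, $B_1\subseteq B_2\subseteq\cdots$, only $i=1$ can have $T_i\equiv 1$ unless $\tau_y$ is deterministic; this happens when $x$ is a leaf. When $d\ge2$ and $x$ is a leaf, $T_2$ has a branch containing at least the edge $v_0v_1$, and it should have surplus at least $2$ to absorb the deficit. I would get this by sharpening the induction to $\Var T\ge 2\,\E T+c(|E(B)|)$ with $c\ge 2$ once $|E(B)|\ge1$. I expect this bookkeeping, i.e.\ finding the right induction hypothesis that closes under the excursion recursion, to be the main obstacle. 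If the constant $2$ in the surplus fails, I would fall back on directly computing $\Var(T_1+T_2)$ for the leaf case.

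\emph{Step 3: combining.} Set $E=\E_x\tau_y$. Step 2 gives $E\le \tfrac12\Var_x(\tau_y)$, so the left side of the display in Step 1 is at most $\tfrac32\Var_x(\tau_y)$. Hence
\[
\Var_x(\tau_y)\ \ge\ \frac{2\log2}{3}\,\frac{E^2}{\log(d+1)}=\frac{\log 4}{3}\,\frac{E^2}{\log(d+1)},
\]
which is the claimed inequality.
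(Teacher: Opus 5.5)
Your proposal is correct, and its skeleton matches the paper's. You remove the $\log\deg(y)$ term from \eqref{eq:mainbounded}, prove $\Var_x(\tau_y)\ge 2\,\E_x\tau_y$, and combine via $V+m\le\frac32V$. Your constant bookkeeping, $(d+1)^2\ge 4d$, is exactly the paper's inequality $1+\frac12\log\ell\le\log(\ell+1)/\log 2$.

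The differences lie in the two ingredients:
\begin{itemize}
\item \textbf{Entropy term.} The paper reopens the proof of Lemma~\ref{lem:entropy-current-bound}. It observes that on a tree $(g(u))_{u\sim y}$ is the point mass at $u_{\ell-1}$, so the entropy correction vanishes. Your pruning to $T'$ with $\deg(y)=1$ gives the same bound while using \eqref{eq:mainbounded} as a black box.
\item \textbf{Linear bound.} The paper does not decompose $\tau_y$. It keeps only the geodesic edges in Lemma~\ref{lem:variance-edge-identity}, where $g(u_j)+g(u_{j+1})=2(\ell-j)-1$, and needs only $\delta_j\ge 1$ and $\delta_{\ell-1}\ge 3$. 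This is a two-line argument reusing machinery already in place. Your route through independent passage times and the compound-geometric variance formula is more elementary and self-contained. It also gives per-edge information and exhibits the sharp example (the path $x,v,y$), at the cost of the deficit/surplus bookkeeping.
\end{itemize}

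That bookkeeping is not the obstacle you anticipate, and no induction is needed. If $k=\deg(v_{i-1})\ge 2$, write $T_i=1+\sum_{r\le N}Y_r$ with $\E N=k-1$, $\Var N=k(k-1)$, and excursion lengths $Y_r\ge 2$. Then
\[
\Var T_i-2\,\E T_i\ \ge\ k(k-1)(\E Y)^2-2(k-1)\E Y-2=(k-1)\E Y\,(k\E Y-2)-2\ \ge\ 2,
\]
using only $k\ge 2$ and $\E Y\ge 2$. So every edge whose branch is not a single vertex has surplus at least $2$. The only possible deficit is $-2$ at $i=1$ when $x$ is a leaf, and it is absorbed by $T_2$ whenever $d\ge 2$. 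The case $d=1$ with $x$ a leaf is exactly the excluded deterministic case.
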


The argument behind Theorem \ref{thm:general-plus-mean} is naturally electrical and extends beyond simple random walk.  If \(P\) is the transition matrix of a finite reversible Markov chain with stationary measure
\(\pi\), then the conductances
\[
c(u,v):=\pi(u)P(u,v)=\pi(v)P(v,u)
\]
turn the chain into a weighted network. In Appendix \ref{app:reversible} we present Theorem \ref{theo:reversible}, a generalization of Theorem~\ref{thm:general-plus-mean} for reversible Markov chains. For clarity, we keep the main text focused on
simple random walk on graphs; analogous weighted versions of the auxiliary
estimates can be obtained in the same way.

Finally, we also revisit Holroyd's interval conjecture, recorded by
Norris, Peres, and Zhai as \cite[Conjecture~6.2]{norris2017surprise}. This
conjecture concerns intervals rather than single times, and states that, for a
Markov chain with \(n\) states,
\[
\mathbb P_x(t\le \tau_y\le t+n)\le C\frac{n}{t},
\qquad t>n.
\]
In Appendix~\ref{app:interval-conjecture}, we show that this interval conjecture
is false even for simple random walk on bounded-degree trees.  The counterexample
uses the trees constructed by Norris, Peres, and Zhai in the proof of
\cite[Claim~2.2]{norris2017surprise}.

The rest of the paper is organized as follows. In section \ref{sec:flow-notation}, we introduce notation and preliminary results used throughout the paper. In Section \ref{sec:3}, we prove our main result Theorem \ref{thm:general-plus-mean}. The lemmas required in this proof are proved in Sections \ref{sec:4} and \ref{sec:5}. In Section \ref{sec:6}, we prove Corollary \ref{thm:bounded-degree}. The counterexample provided by Proposition \ref{prop:funnel} is constructed in Section \ref{sec:funnel}. Section \ref{sec:8} contains the proof of Corollary \ref{theo:tree}. Finally, in Appendix \ref{app:reversible} we generalize our main result to reversible chains, and in Appendix \ref{app:interval-conjecture} we provide a counterexample to Holroyd's interval conjecture \cite[Conjecture~6.2]{norris2017surprise}.

\section{Notation and preliminaries}\label{sec:flow-notation}

All logarithms are natural. We use standard asymptotic notation: \(A_L=O(B_L)\) means \(A_L\le C B_L\) for a universal constant \(C\), and \(A_L\asymp B_L\) means both \(A_L=O(B_L)\) and
\(B_L=O(A_L)\). For vertices $u$, $v$ of a graph, we write \(u\sim v\) when \(u\) and \(v\) are adjacent. If \(\theta\) is an antisymmetric function on oriented edges,
its divergence is
\[
\operatorname{div}\theta(u):=\sum_{v\sim u}\theta(u,v).
\]
We say that \(\theta\) is a unit flow from \(x\) to \(y\) if
\[
\operatorname{div}\theta=\mathbf 1_{\{x\}}-\mathbf 1_{\{y\}}.
\]
When $\theta$ is a unit flow from $x$ to $y$, we will use the summation-by-parts identity
\begin{equation}\label{eq:summation-by-parts}
	\sum_{\{u,v\}\in E}\theta(u,v)\bigl(f(u)-f(v)\bigr)
	=
	\sum_{u\in V}f(u)\operatorname{div}\theta(u) = f(x)-f(y),
\end{equation}
where the expression on the left is independent of the chosen orientation of
each edge. When a function \(g:V\to\mathbb R\) satisfies \(g(y)=0\) and \(\theta_g(u,v):=g(u)-g(v)\) is a unit flow from \(x\) to \(y\), we call \(g\) the unit-current voltage from \(x\)
to \(y\). The effective resistance between vertices $x$ and $y$ is defined by
\[
R_{\mathrm{eff}}(x,y):=g(x)-g(y)=g(x).
\]
A standard inequality we will use is $R_{\mathrm{eff}}(x,y)\le \dist(x,y)$, which follows from Thomson's principle. For a detailed treatment of electrical networks, we refer to \cite{levin2017markov}. Finally, in sums of the form
\[
\sum_{\{u,v\}\in E}\frac{(g(u)-g(v))^2}{g(u)+g(v)},
\]
edges with \(g(u)=g(v)=0\) are understood to contribute \(0\).

\section{Proof of Theorem \ref{thm:general-plus-mean}}\label{sec:3}
Fix \(x,y\in V\). If \(x=y\), then \(\tau_y=0\), so there is nothing to prove.
Thus we assume \(x\neq y\). We begin by introducing the two functions that will appear
throughout the argument. First, let $h \colon V \longrightarrow \mathbb{R}$ be defined by
\[
h(v):=\E_v\tau_y \quad \forall \, v \in V.
\]
Thus \(h(v)\) is the expected time to hit \(y\) when the walk starts from \(v\). Next, let \(\mu\) be the occupation measure of the walk before it hits \(y\), that is,
\[
\mu(v):=\E_x\#\{0\le t<\tau_y:X_t=v\},
\]
and define $g \colon V \longrightarrow \mathbb{R}$ by
\[
g(v):=\frac{\mu(v)}{\deg(v)} \quad \forall \, v \in V.
\]
Since the occupation is counted only before \(\tau_y\), we have \(g(y)=0\).

Equivalently, \(g(u)\) is the expected number of crossings of any fixed
oriented edge leaving \(u\) before time \(\tau_y\). Hence the antisymmetric edge function
\[
\theta_g(u,v):=g(u)-g(v)
\]
represents the expected net flow across the oriented edge \((u,v)\).  We claim
that \(\theta_g\) is a unit flow from \(x\) to \(y\). Indeed, for any stopped path from $x$ to $y$, the net number of exits from a vertex $u$ minus the number of entrances into $u$ is 
\[\mathbf 1_{\{u=x\}}-\mathbf 1_{\{u=y\}}.\]
Taking expectations gives
\begin{equation}\label{eq:g-unit-current}
	\operatorname{div}\theta_g(u)
	=
	\sum_{v\sim u}\bigl(g(u)-g(v)\bigr)
	=
	\mathbf 1_{\{u=x\}}-\mathbf 1_{\{u=y\}}.
\end{equation}
This observation, together with the summation-by-parts identity \eqref{eq:summation-by-parts},
allows us to express the mean hitting time in terms of the flow
\(\theta_g\) and \(h\).

\begin{lemma}\label{lem:mean-current-identity}
	For every \(x,y\in V\),
	\begin{equation}\label{eq:mean-current-identity}
		\E_x\tau_y
		=
		\sum_{\{u,v\}\in E}
		\bigl(g(u)-g(v)\bigr)\bigl(h(u)-h(v)\bigr).
	\end{equation}
\end{lemma}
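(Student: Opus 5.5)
The plan is to apply the summation-by-parts identity \eqref{eq:summation-by-parts} with the unit flow \(\theta_g\) and the test function \(f=h\). By \eqref{eq:g-unit-current}, \(\theta_g(u,v)=g(u)-g(v)\) is a unit flow from \(x\) to \(y\). Identity \eqref{eq:summation-by-parts} therefore gives
\[
\sum_{\{u,v\}\in E}\bigl(g(u)-g(v)\bigr)\bigl(h(u)-h(v)\bigr)=h(x)-h(y).
\]
The summand is symmetric under swapping \(u\) and \(v\), so it does not depend on the orientation chosen for each edge. The argument then finishes with two observations: \(h(y)=\E_y\tau_y=0\) because \(\tau_y=0\) under \(\mathbb P_y\), and \(h(x)=\E_x\tau_y\) by definition. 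These yield \eqref{eq:mean-current-identity}. The case \(x=y\) is trivial, since \(g\equiv 0\) there and both sides vanish, although the section has already excluded it.

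The only point needing care is the flow property \eqref{eq:g-unit-current}, and the paper has just justified it by the exit/entrance counting argument. If I wanted to double-check it independently, I would verify it algebraically. For \(u\neq y\), the occupation measure satisfies the standard Green function relation
\[
\mu(u)=\mathbf 1_{\{u=x\}}+\sum_{v\sim u,\,v\neq y}\frac{\mu(v)}{\deg(v)}.
\]
This says that every visit to \(u\) is either the initial one or arrives by a step from a neighbor \(v\neq y\) taken before \(\tau_y\). Using \(\mu(u)=\deg(u)g(u)\) and \(g(y)=0\), the relation becomes
\[
\sum_{v\sim u}\bigl(g(u)-g(v)\bigr)=\mathbf 1_{\{u=x\}}\qquad(u\neq y).
\]
The divergence at \(y\) is then forced to equal \(-1\), because the divergences summed over all vertices vanish.

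As an alternative sanity check, one can expand the right-hand side directly. Expanding \(\sum_{\{u,v\}}(g(u)-g(v))(h(u)-h(v))\) as \(\sum_u h(u)\sum_{v\sim u}(g(u)-g(v))\) recovers the same computation. Alternatively, expanding it as \(\sum_u g(u)\sum_{v\sim u}(h(u)-h(v))\) and using the harmonicity relation \(\sum_{v\sim u}(h(u)-h(v))=\deg(u)\) for \(u\neq y\) (first-step analysis) gives \(\sum_{u\neq y} g(u)\deg(u)=\sum_u\mu(u)=\E_x\tau_y\) directly. This second route avoids even the flow property and uses only \(g(y)=0\). I would present the first route, since it matches the paper's electrical framing, and perhaps mention the second in one line.
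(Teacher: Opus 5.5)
Your main argument is exactly the paper's proof: apply summation by parts \eqref{eq:summation-by-parts} with the unit flow $\theta_g$ and $f=h$, then use $h(x)=\E_x\tau_y$ and $h(y)=0$. Your supplementary checks are also correct. These are the Green-function verification of \eqref{eq:g-unit-current} and the alternative expansion using $\sum_{v\sim u}(h(u)-h(v))=\deg(u)$ for $u\neq y$, which is the computation the paper itself uses for the middle term in the reversible appendix.
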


The next ingredient is a variance analogue of the mean identity. Its proof is
based on the martingale \(h(X_{t\wedge\tau_y})+t\wedge\tau_y\), and is given in
Section~\ref{sec:4}.

\begin{lemma}\label{lem:variance-edge-identity}
	For every \(x,y\in V\),
	\begin{equation}\label{eq:variance-edge-identity}
		\Var_x(\tau_y)+\E_x\tau_y
		=
		\sum_{\{u,v\}\in E}
		\bigl(g(u)+g(v)\bigr)\bigl(h(u)-h(v)\bigr)^2.
	\end{equation}
\end{lemma}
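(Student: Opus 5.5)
The plan is to use the martingale already singled out before the statement,
\[
M_t:=h(X_{t\wedge\tau_y})+t\wedge\tau_y ,
\]
and to compute \(\Var_x(\tau_y)\) as the expected sum of the squared martingale increments. The first step is to check that \(M\) is a martingale. For \(u\neq y\), the function \(h\) satisfies
\[
h(u)=1+\frac1{\deg(u)}\sum_{v\sim u}h(v),
\]
so \(\E[M_{t+1}-M_t\mid \mathcal F_t]=0\) on \(\{t<\tau_y\}\). After time \(\tau_y\) the increments vanish.

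Next I will record the endpoint values. We have \(M_0=h(x)=\E_x\tau_y\), and since \(h(y)=0\), \(M_{\tau_y}=\tau_y\). Hence
\[
\Var_x(\tau_y)=\E_x\bigl[(M_{\tau_y}-M_0)^2\bigr].
\]
By orthogonality of martingale increments,
\[
\E_x\bigl[(M_{t}-M_0)^2\bigr]=\sum_{s<t}\E_x\bigl[(M_{s+1}-M_s)^2\bigr]
\]
for each fixed \(t\).

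The technical point, which I expect to be the only real obstacle, is to pass from \(t\) to \(\tau_y\). On a finite connected graph, \(\tau_y\) has exponential tails. The increments of \(M\) are bounded by \(1+2\max h\), so \(|M_t|\le \max h+\tau_y\). Then \(M_t\to M_{\tau_y}\) in \(L^2\) by dominated convergence, and the sum of squared increments converges monotonically. This gives
\[
\Var_x(\tau_y)=\E_x\sum_{s<\tau_y}(M_{s+1}-M_s)^2 .
\]

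Now I will compute the conditional expectation of a squared increment. Given \(X_s=u\neq y\) with \(s<\tau_y\), the increment equals \(h(v)-h(u)+1\) with \(v\) a uniform neighbour of \(u\). Expanding, and using the harmonic-type relation, which gives \(\frac1{\deg u}\sum_{v\sim u}(h(v)-h(u))=-1\), we get
\[
\phi(u):=\frac1{\deg(u)}\sum_{v\sim u}\bigl(h(v)-h(u)+1\bigr)^2=\frac1{\deg(u)}\sum_{v\sim u}\bigl(h(u)-h(v)\bigr)^2-1 .
\]
Summing over times before \(\tau_y\) turns the expectation into an integral against the occupation measure:
\[
\Var_x(\tau_y)=\sum_{u}\mu(u)\phi(u)=\sum_u g(u)\sum_{v\sim u}\bigl(h(u)-h(v)\bigr)^2-\sum_u\mu(u).
\]
Since \(\sum_u\mu(u)=\E_x\tau_y\), the last term is \(-\E_x\tau_y\). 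Moving it to the left-hand side and regrouping the double sum over oriented pairs into unoriented edges, each edge \(\{u,v\}\) collects \(g(u)+g(v)\). This yields \eqref{eq:variance-edge-identity}. The vertex \(y\) causes no trouble, since \(g(y)=\mu(y)=0\).
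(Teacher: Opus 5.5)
Your proof is correct and follows the paper's route. You use the martingale $h(X_{t\wedge\tau_y})+t\wedge\tau_y$ and orthogonality of its increments to write $\Var_x(\tau_y)=\sum_{u\neq y}\mu(u)\,\Var_u(h(X_1))$, then expand the square and regroup oriented pairs into edges; the only cosmetic difference is that you evaluate the cross term vertex by vertex from the Poisson equation, getting $\phi(u)=\frac{1}{\deg(u)}\sum_{v\sim u}(h(u)-h(v))^2-1$, whereas the paper evaluates it after summing, via Lemma~\ref{lem:mean-current-identity}.
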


The final ingredient is an entropy estimate for the unit flow $\theta_g$.

\begin{lemma}\label{lem:entropy-current-bound}
	With \(g\) as above,
	\begin{equation}\label{eq:entropy-current-bound}
		\sum_{\{u,v\}\in E}
		\frac{\bigl(g(u)-g(v)\bigr)^2}{g(u)+g(v)}
		\le
		1+\log n.
	\end{equation}
\end{lemma}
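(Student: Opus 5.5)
The plan is to convert the entropy sum into a telescoping sum using the elementary inequality
\[
\frac{a-b}{a+b}\le \tfrac12\log\frac{a}{b}\qquad (a\ge b>0),
\]
which is \(\tanh^{-1}\)-type: with \(r=a/b\ge1\) it reads \((r-1)/(r+1)\le\frac12\log r\). Both sides vanish at \(r=1\), and the derivative of the right side, \(1/(2r)\), dominates that of the left side, \(2/(r+1)^2\), because \((r+1)^2\ge 4r\). Multiplying by \(a-b\ge0\) gives
\[
\frac{(a-b)^2}{a+b}\le \tfrac12(a-b)(\log a-\log b).
\]
This is symmetric in \(a,b\), so no orientation needs to be chosen.

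First I separate the edges incident to \(y\). Since \(g(y)=0\), an edge \(\{u,y\}\) contributes exactly \(g(u)^2/g(u)=g(u)\), or \(0\) if \(g(u)=0\). By \eqref{eq:g-unit-current} at \(u=y\) we have \(\sum_{u\sim y}g(u)=1\), so these edges contribute exactly \(1\).

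Next I treat the remaining edges. Let \(S=\{v\ne y: g(v)>0\}\). An edge with an endpoint outside \(S\cup\{y\}\) has both endpoints at zero: if \(g(u)=0\) with \(u\neq y\), the divergence equation at \(u\) forces \(\sum_{v\sim u}g(v)=0\). Such edges contribute \(0\) by convention. On edges inside \(S\), I apply the displayed inequality with \(f=\log g\) and sum by parts over the graph with \(y\) deleted. In that graph, the flow \(\theta_g\) restricted to edges within \(S\) has divergence \(\mathbf 1_{\{x\}}(u)-g(u)\mathbf 1_{\{u\sim y\}}\) at each \(u\in S\), because the lost edge \((u,y)\) carried \(g(u)\). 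Hence
\[
\sum_{\{u,v\}\subset S}\frac{(g(u)-g(v))^2}{g(u)+g(v)}
\le \tfrac12\Bigl(\log g(x)+\sum_{u\sim y}g(u)\log\frac1{g(u)}\Bigr).
\]

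Finally I bound both terms. By \eqref{eq:g-unit-current}, \(g\) is the unit-current voltage from \(x\) to \(y\), so \(g(x)=R_{\mathrm{eff}}(x,y)\le\dist(x,y)\le n\). The numbers \((g(u))_{u\sim y}\) form a probability vector on \(\deg(y)\) points, so their Shannon entropy is at most \(\log\deg(y)\le\log n\). Combining the pieces gives a total of at most
\[
1+\tfrac12\bigl(\log\dist(x,y)+\log\deg(y)\bigr)\le 1+\log n,
\]
which is \eqref{eq:entropy-current-bound} and also the localized form used in Remark~\ref{rem:remarkmain}.

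The main obstacle is the boundary bookkeeping at \(y\), where \(\log g\) blows up. Removing \(y\) and tracking the modified divergence must produce exactly the entropy term, with no stray contributions from zero-voltage vertices. The scalar inequality and the two final bounds are routine.
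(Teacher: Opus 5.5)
Your proof is correct and follows essentially the paper's argument. You split off the edges at $y$ (contributing exactly $1$), apply $\frac{(a-b)^2}{a+b}\le\frac12(a-b)(\log a-\log b)$ on edges inside the positive-voltage set, and sum by parts against $\log g$. You then bound $g(x)=R_{\mathrm{eff}}(x,y)\le\dist(x,y)$ and the entropy by $\log\deg(y)$, exactly as the paper does.

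The differences are cosmetic. You show that positive-voltage vertices have no zero-voltage neighbours other than $y$ via the divergence equation, where the paper uses reachability. You also sum by parts with the modified divergence on $G-y$, instead of extending $\log g$ by $0$ and subtracting the boundary edges.
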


We now prove the theorem from these three ingredients.

\begin{proof}[Proof of Theorem~\ref{thm:general-plus-mean}]
	Write $L=1+\log n$. By Lemma \ref{lem:mean-current-identity},
	\[
	\E_x\tau_y
	=
	\sum_{\{u,v\}\in E}
	\bigl(g(u)-g(v)\bigr)\bigl(h(u)-h(v)\bigr).
	\]
	Applying Cauchy--Schwarz inequality, we obtain
	\[
	(\E_x\tau_y)^2
	\le
	\left[
	\sum_{\{u,v\}\in E}
	\frac{\bigl(g(u)-g(v)\bigr)^2}{g(u)+g(v)}
	\right]
	\left[
	\sum_{\{u,v\}\in E}
	\bigl(g(u)+g(v)\bigr)\bigl(h(u)-h(v)\bigr)^2
	\right].
	\]
	The first factor is bounded by $L$ by
	Lemma~\ref{lem:entropy-current-bound}, while the second factor is equal to
	\(\Var_x(\tau_y)+\E_x\tau_y\) by
	Lemma~\ref{lem:variance-edge-identity}. Therefore
	\[
	(\E_x\tau_y)^2
	\le
	L\bigl(\Var_x(\tau_y)+\E_x\tau_y\bigr),
	\]
	which is equivalent to
	\[
	\Var_x(\tau_y)+\E_x\tau_y
	\ge
	\frac{(\E_x\tau_y)^2}{1+\log n}.\qedhere
	\]
\end{proof}

\begin{proof}[Proof of Remark \ref{rem:remarkmain}]
	In view of \eqref{eq:improve-entropy}, it suffices to repeat the proof of Theorem~\ref{thm:general-plus-mean},
	replacing \(L=1+\log n\) by
	\[
	R:=1+\frac12\log\dist(x,y)+\frac12\log\deg(y)
	=
	\frac{2+\log\dist(x,y)+\log\deg(y)}{2}.\qedhere
	\]
\end{proof}

\section{Proofs of Lemmas \ref{lem:mean-current-identity} and \ref{lem:variance-edge-identity}}\label{sec:4}

Throughout this section, \(h\), \(\mu\), and \(g\) are as in the proof of
Theorem~\ref{thm:general-plus-mean}. Recall that
\[
\theta_g(u,v)=g(u)-g(v)
\]
is a unit flow from \(x\) to \(y\), by \eqref{eq:g-unit-current}.

\begin{proof}[Proof of Lemma~\ref{lem:mean-current-identity}]
	Since \(\theta_g\) is a unit flow from \(x\) to \(y\), the summation-by-parts
	identity \eqref{eq:summation-by-parts} applied to \(f=h\), gives
	\[
	\sum_{\{u,v\}\in E}
	\bigl(g(u)-g(v)\bigr)\bigl(h(u)-h(v)\bigr)
	=
	h(x)-h(y).
	\]
	The result follows by noting that \(h(x)=\E_x\tau_y\) and $h(y)=0$.
\end{proof}

The variance identity comes from the standard martingale associated with the
Poisson equation for \(h\). For every $u \neq y$, the strong Markov property gives 
\begin{equation}\label{eq:poisson}
	h(u)
	=
	1+\frac1{\deg(u)}\sum_{v\sim u}h(v).
\end{equation}
In other words, before the walk hits \(y\), one step of the walk decreases the
expected remaining hitting time by exactly one:
\[
\E_u h(X_1)=h(u)-1,
\quad \forall \, u\neq y.
\]
Thus the decrease of \(h(X_t)\) is compensated by the increase of the time
variable, which gives the following martingale.
\begin{lemma}\label{lem:hitting-time-martingale}
	The process
	\[
	M_t:=h(X_{t\wedge\tau_y})+t\wedge\tau_y
	\]
	is a martingale with respect to the natural filtration $\mathcal F_t:=\sigma(X_0,\ldots,X_t)$.
\end{lemma}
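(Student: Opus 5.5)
We verify the martingale property one step at a time, and separately check integrability and adaptedness. Adaptedness is immediate. Write $\tau=\tau_y$. The event $\{\tau\le t\}$ lies in $\mathcal F_t$, because $\tau$ is a stopping time for the natural filtration. Hence $M_t$ is a function of $X_0,\dots,X_t$.

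Integrability follows from boundedness. Since $G$ is finite and connected, $h$ takes finitely many finite values; finiteness of $\E_v\tau_y$ is standard for an irreducible finite chain. Consequently
\[
|M_t|\le \max_v h(v)+t,
\]
so each $M_t$ is bounded and in particular integrable.

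For the martingale identity $\E[M_{t+1}\mid\mathcal F_t]=M_t$, we split on the $\mathcal F_t$-measurable event $\{\tau\le t\}$.

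On $\{\tau\le t\}$ the stopped process is frozen: $X_{(t+1)\wedge\tau}=X_\tau=X_{t\wedge\tau}$ and $(t+1)\wedge\tau=t\wedge\tau$. Therefore $M_{t+1}=M_t$, which is $\mathcal F_t$-measurable.

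On $\{\tau>t\}$ we have $X_t\neq y$, together with $(t+1)\wedge\tau=t+1$ and $X_{(t+1)\wedge\tau}=X_{t+1}$. Note that even when $\tau=t+1$ we still get $h(X_{t+1})=h(y)=0$, consistent with $M_{t+1}=h(X_{t+1})+t+1$. The Markov property then gives
\[
\E[h(X_{t+1})\mid\mathcal F_t]=\frac1{\deg(X_t)}\sum_{v\sim X_t}h(v),
\]
and this equals $h(X_t)-1$ by \eqref{eq:poisson}, since $X_t\neq y$. Thus
\[
\E[M_{t+1}\mid\mathcal F_t]=h(X_t)-1+t+1=h(X_t)+t=M_t
\]
on this event.

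Combining the two cases via the indicators $\mathbf 1_{\{\tau\le t\}}$ and $\mathbf 1_{\{\tau>t\}}$, both of which are $\mathcal F_t$-measurable, gives the claim.

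There is no real obstacle. The only points needing care are the event split at $\tau$ and the boundary case $\tau=t+1$, where $h(y)=0$ keeps the formula consistent. One also needs \eqref{eq:poisson}, which is first-step analysis using $h(u)<\infty$.
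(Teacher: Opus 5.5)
Your proof is correct and follows the paper's argument essentially line for line. Integrability comes from boundedness of $h$. You then split on the $\mathcal F_t$-measurable event $\{\tau_y\le t\}$, where the stopped process is frozen, versus $\{\tau_y>t\}$, where the Markov property and the Poisson equation \eqref{eq:poisson} give $\E_x(M_{t+1}\mid\mathcal F_t)=M_t$; your extra remarks on adaptedness and the case $\tau_y=t+1$ are harmless details the paper leaves implicit.
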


\begin{proof}
	Since \(G\) is finite, \(h\) is bounded, and therefore \(M_t\) is integrable for
	every \(t\). Fix \(t\ge0\). On the event \(\{\tau_y\le t\}\), we have
	\[
	X_{(t+1)\wedge\tau_y}=X_{t\wedge\tau_y}=y,
	\]
	and hence \(M_{t+1}=M_t\).
	
	On the event \(\{\tau_y>t\}\), we have \(X_t\neq y\). Therefore, using
	\eqref{eq:poisson},
	\begin{align*}
		\E_x(M_{t+1}\mid\mathcal F_t)
		&=
		\E_x\bigl(h(X_{t+1})+t+1\mid\mathcal F_t\bigr) \\
		&=
		\frac1{\deg(X_t)}\sum_{u\sim X_t}h(u)+t+1 \\
		&=
		h(X_t)+t
		=
		M_t.
	\end{align*}
	Combining the two cases gives
	\[
	\E_x(M_{t+1}\mid\mathcal F_t)=M_t,
	\]
	so \((M_t)\) is a martingale.
\end{proof}

Since $M_0=\E_x\tau_y$ and $M_{\tau_y}=\tau_y$, the centered hitting time can be written as a sum of martingale increments:
\begin{equation}\label{eq:increments}
\tau_y-\E_x\tau_y
=
\sum_{t=1}^{\tau_y}(M_t-M_{t-1}).
\end{equation}
Orthogonality of martingale increments then gives the following variance
decomposition.

\begin{lemma}\label{lem:martingale-variance-identity}
	For every \(x,y\in V\),
	\begin{equation}\label{eq:martingale-variance-identity}
		\Var_x(\tau_y)
		=
		\E_x\sum_{t=1}^{\tau_y}
		\Var\!\left(h(X_t)\mid\mathcal F_{t-1}\right).
	\end{equation}
\end{lemma}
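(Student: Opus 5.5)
The plan is to prove Lemma~\ref{lem:martingale-variance-identity} as the standard orthogonality-of-increments computation for the martingale of Lemma~\ref{lem:hitting-time-martingale}, using \eqref{eq:increments}. Write $D_t:=M_t-M_{t-1}$, so that $D_t=0$ for $t>\tau_y$. Then $\tau_y-\E_x\tau_y=\sum_{t\ge1}D_t$, where the sum is almost surely finite. On the event $\{\tau_y\ge t\}$, which lies in $\mathcal F_{t-1}$ because $\tau_y$ is a stopping time, we have $D_t=h(X_t)-h(X_{t-1})+1$. Since $\E_x(h(X_t)\mid\mathcal F_{t-1})=h(X_{t-1})-1$ there by \eqref{eq:poisson}, this gives
\[
\E_x(D_t^2\mid\mathcal F_{t-1})=\Var\!\left(h(X_t)\mid\mathcal F_{t-1}\right)\mathbf 1_{\{\tau_y\ge t\}}.
\]

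Next I truncate. Let $S_N:=\sum_{t=1}^N D_t=M_N-M_0$. For $s<t$, $\E_x(D_sD_t)=\E_x\bigl(D_s\,\E_x(D_t\mid\mathcal F_{t-1})\bigr)=0$; the increments are bounded because $h$ is bounded and $|D_t|\le 2\max h+1$, so every expectation here is finite. Hence
\[
\E_x S_N^2=\E_x\sum_{t=1}^{N}\Var\!\left(h(X_t)\mid\mathcal F_{t-1}\right)\mathbf 1_{\{\tau_y\ge t\}}=\E_x\sum_{t=1}^{\tau_y\wedge N}\Var\!\left(h(X_t)\mid\mathcal F_{t-1}\right).
\]

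Finally I pass to the limit $N\to\infty$, which is the only delicate step. The right-hand side converges to $\E_x\sum_{t=1}^{\tau_y}\Var(h(X_t)\mid\mathcal F_{t-1})$ by monotone convergence, since the terms are nonnegative. For the left-hand side, $S_N\to\tau_y-\E_x\tau_y$ almost surely, and $|S_N|\le\tau_y+\max h+\E_x\tau_y$. On a finite connected graph $\tau_y$ has exponential tails, so $\E_x\tau_y^2<\infty$, and dominated convergence gives $\E_xS_N^2\to\E_x(\tau_y-\E_x\tau_y)^2=\Var_x(\tau_y)$. This yields \eqref{eq:martingale-variance-identity}.
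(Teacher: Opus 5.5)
Your proof is correct and follows essentially the same route as the paper: orthogonality of the increments of $M_t$, with $\E_x(D_t^2\mid\mathcal F_{t-1})$ identified on $\{\tau_y\ge t\}$ as $\Var(h(X_t)\mid\mathcal F_{t-1})$ via \eqref{eq:poisson}. The only difference is that you prove the identity for the sum truncated at $N$ and then let $N\to\infty$ by monotone and dominated convergence. This makes explicit the $L^2$ convergence of $\sum_t D_t$ that the paper simply asserts from $\E_x\tau_y^2<\infty$.
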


\begin{proof}
	Recall that Lemma \ref{lem:hitting-time-martingale} shows that $M_t=h(X_{t\wedge\tau_y})+t\wedge\tau_y$ is a martingale. Set
	\[
	\Delta_t:=M_{t}-M_{(t-1)}.
	\]
	Then \((\Delta_t)_{t\ge1}\) are martingale differences and
	\(\Delta_t=0\) on \(\{t>\tau_y\}\). Since the graph is finite,
	\(\tau_y\) has finite second moment, and therefore from (\ref{eq:increments}) we obtain
	\[
	\tau_y-\E_x\tau_y = \sum_{t=1}^{\tau_y} \Delta_t =
	\sum_{t=1}^{\infty}\Delta_t,
	\]
	where the infinite sum is finite almost surely, and the equality holds in \(L^2\). By orthogonality of martingale differences,
	\[
	\Var_x(\tau_y)
	=
	\sum_{t=1}^{\infty}\E_x[\Delta_t^2].
	\]
	On the event \(\{t\le \tau_y\}\), we have
	\[
	\Delta_t
	=
	h(X_t)-h(X_{t-1})+1.
	\]
	Moreover, since \(X_{t-1}\neq y\) on this event, \eqref{eq:poisson}
	gives
	\[
	\E_x\!\left[h(X_t)\mid\mathcal F_{t-1}\right]
	=
	h(X_{t-1})-1.
	\]
	Thus, on \(\{t\le \tau_y\}\),
	\[
	\Delta_t
	=
	h(X_t)-
	\E_x\!\left[h(X_t)\mid\mathcal F_{t-1}\right].
	\]
	Since \(\{t\le\tau_y\}\in\mathcal F_{t-1}\), it follows that
	\[
	\E_x[\Delta_t^2]
	=
	\E_x\!\left[
	\mathbf 1_{\{t\le\tau_y\}}
	\Var\!\left(h(X_t)\mid\mathcal F_{t-1}\right)
	\right].
	\]
	Summing over \(t\ge1\) gives
	\[
	\Var_x(\tau_y)
	=
	\E_x\sum_{t=1}^{\tau_y}
	\Var\!\left(h(X_t)\mid\mathcal F_{t-1}\right),
	\]
	as claimed.
\end{proof}

We now rewrite this martingale variance decomposition as an edge sum using the occupation measure \(\mu\) and the function \(g\).

\begin{proof}[Proof of Lemma~\ref{lem:variance-edge-identity}]
	We rewrite the martingale variance identity as an edge sum. For \(u\neq y\),
	let
	\[
	q(u):=\Var_u(h(X_1)).
	\]
	By Lemma~\ref{lem:martingale-variance-identity} and the Markov property,
	\[
	\Var_x(\tau_y)
	=
	\E_x\sum_{t=1}^{\tau_y}
	\Var\!\left(h(X_t)\mid\mathcal F_{t-1}\right)
	=
	\E_x\sum_{s=0}^{\tau_y-1}q(X_s).
	\]
	Using the occupation measure \(\mu\), this becomes
	\[
	\Var_x(\tau_y)
	=
	\sum_{u\neq y}\mu(u)q(u).
	\]
	Recall that for every $u \neq y$, we have that $\E_u h(X_1)=h(u)-1$. Therefore
	\[
	q(u)
	=
	\frac1{\deg(u)}
	\sum_{v\sim u}
	\bigl(h(v)-h(u)+1\bigr)^2.
	\]
	Since \(\mu(u)=\deg(u)g(u)\), we obtain
	\[
	\Var_x(\tau_y)
	=
	\sum_{u\neq y}\sum_{v\sim u}
	g(u)\bigl(h(v)-h(u)+1\bigr)^2.
	\]
	Expanding the square gives
	\begin{align*}
		\Var_x(\tau_y)
		&=
		\sum_{u \neq y}\sum_{v\sim u}
		g(u)\bigl(h(u)-h(v)\bigr)^2 
		-2\sum_{u \neq y}\sum_{v\sim u}
		g(u)\bigl(h(u)-h(v)\bigr)
		+
		\sum_{u \neq y}\deg(u)g(u).
	\end{align*}
	Note that the third term satisfies
	\[
	\sum_{u \neq y}\deg(u)g(u)
	=
	\sum_{u \neq y}\mu(u)
	=
	\E_x\tau_y.
	\]
	Next, the middle term can be computed using Lemma \ref{lem:mean-current-identity}. Indeed, grouping the two orientations of each edge gives
	\[
	\sum_{u \neq y}\sum_{v\sim u}
	g(u)\bigl(h(u)-h(v)\bigr)
	=
	\sum_{\{u,v\}\in E}
	\bigl(g(u)-g(v)\bigr)\bigl(h(u)-h(v)\bigr)
	=
	\E_x\tau_y.
	\]
	Finally,
	\[
	\sum_{u \neq y}\sum_{v\sim u}
	g(u)\bigl(h(u)-h(v)\bigr)^2
	=
	\sum_{\{u,v\}\in E}
	\bigl(g(u)+g(v)\bigr)
	\bigl(h(u)-h(v)\bigr)^2.
	\]
	Combining these identities, we obtain
	\[
	\Var_x(\tau_y)
	=
	\sum_{\{u,v\}\in E}
	\bigl(g(u)+g(v)\bigr)
	\bigl(h(u)-h(v)\bigr)^2
	-
	\E_x\tau_y,
	\]
	which is equivalent to the desired identity.
\end{proof}

\section{Proof of Lemma \ref{lem:entropy-current-bound}}\label{sec:5}

Recall that \(g(y)=0\) and
\[
\sum_{v\sim u}(g(u)-g(v))=\mathbf 1_{\{u=x\}}-\mathbf 1_{\{u=y\}}.
\]
In particular, \(g\) is the voltage potential of the unit current from \(x\) to
\(y\).

\begin{proof}[Proof of Lemma~\ref{lem:entropy-current-bound}]
Let
\[
C:=\{u\in V:g(u)>0\},
\qquad
N_y:=\{u\in V:u\sim y\}.
\]
Edges with both endpoints outside \(C\) contribute \(0\) to the sum. If \(u\neq y\) is adjacent to a point of \(C\), then \(u\) is reachable from
\(x\) before hitting \(y\), and hence \(g(u)>0\). Thus the only boundary edges
of \(C\) are the edges from \(C\) to \(y\). We first consider the contribution of such edges. Since \(g(y)=0\),
	\begin{equation}\label{eq:boundary}
	\sum_{u \in C\cap N_y}\frac{(g(u)-g(y))^2}{g(u)+g(y)}
	=
	\sum_{u \in C\cap N_y}g(u) = \sum_{u\sim y}(g(u)-g(y))= -\operatorname{div} \theta_g(y)=1.
	\end{equation}
	It remains to bound the contribution of the edges inside \(C\). From the inequality
	\[
	\frac{r-1}{r+1}\le \frac12\log r \quad \forall \, r \geq 1,
	\]
	we infer that
	\begin{equation}\label{eq:ab-ineq}
	\frac{(a-b)^2}{a+b}
	\le
	\frac12(a-b)(\log a-\log b) \quad \forall \, a,b>0.
	\end{equation}
	Since both sides are symmetric in \(a\) and \(b\), we may apply this inequality
	on each unoriented edge. We deduce that
	\begin{equation}\label{eq:entropy-reduction}
		\sum_{\substack{\{u,v\}\in E\\ u,v\in C}}
		\frac{(g(u)-g(v))^2}{g(u)+g(v)}
		\le
		\frac12
		\sum_{\substack{\{u,v\}\in E\\ u,v\in C}}
		(g(u)-g(v))(\log g(u)-\log g(v)).
	\end{equation}
	Consider the function
	\[
	G(z)=
	\begin{cases}
		\log g(z) &\mbox{if } z\in C;\\
		0 &\mbox{if } z\notin C.
	\end{cases}
	\]
	Since \(\theta_g\) is a unit flow from \(x\) to \(y\), the summation-by-parts
	identity \eqref{eq:summation-by-parts} applied to \(G\) gives
	\[
	\sum_{\{u,v\}\in E}
	(g(u)-g(v))(G(u)-G(v))
	=
	G(x)-G(y)
	=
	\log g(x).
	\]
	The only edges with exactly one endpoint in \(C\) are the edges from \(C\) to \(y\).
	Therefore
	\begin{equation}\label{eq: 10.2}
	\sum_{\substack{\{u,v\}\in E\\ u,v\in C}}
	(g(u)-g(v))(\log g(u)-\log g(v)) = 
	\log g(x) - \sum_{u \in C\cap N_y} g(u)\log g(u).
	\end{equation}
	Recall that \(g\) is the unit-current voltage, and thus
	\[
	g(x)=R_{\mathrm{eff}}(x,y)\le \dist(x,y).
	\]
	Moreover, by \eqref{eq:boundary} the vector $(g(u))_{u \in N_y}$ is a probability measure on $N_y$. Hence, using the usual convention $0\log 0 = 0$,
	\[
	-\sum_{u\in C\cap N_y}g(u)\log g(u)
	=
	-\sum_{u\in N_y}g(u)\log g(u)
	\le
	\log |N_y|
	=
	\log\deg(y).
	\]
	Combining this with \eqref{eq:entropy-reduction}, \eqref{eq: 10.2}, and adding the boundary contribution \eqref{eq:boundary}, we conclude that
	\begin{equation}\label{eq:improve-entropy}
	\sum_{\{u,v\}\in E}
	\frac{(g(u)-g(v))^2}{g(u)+g(v)}
	\le
	1+\frac{1}{2} (\log \dist(x,y) + \log \deg(y)) \leq 1+ \log n.\qedhere
	\end{equation}
\end{proof}

\section{Proof of Corollary \ref{thm:bounded-degree}}\label{sec:6}

The following estimate allows us to remove the additive mean term
when the degree is bounded.

\begin{lemma}\label{lem:linear}
	Let $G$ be a graph with maximum degree at most \(D\). If \(\tau_y\) is not deterministic, then
	\[
	\Var_x(\tau_y)
	\ge
	\frac{\E_x\tau_y}{3D}.
	\]
\end{lemma}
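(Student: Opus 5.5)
The plan is to work with the martingale variance decomposition already established in the proof of Lemma~\ref{lem:variance-edge-identity}:
\[
\Var_x(\tau_y)=\sum_{u\neq y}\mu(u)\,q(u),\qquad q(u)=\frac1{\deg(u)}\sum_{v\sim u}\bigl(h(v)-h(u)+1\bigr)^2 .
\]
Since $\E_x\tau_y=\sum_{u\neq y}\mu(u)$, it suffices to show two things: that $q(u)\ge 1/D$ on a set of vertices, and that this set carries a fixed fraction (about $1/3$) of the occupation measure $\mu$.

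\emph{Step 1: good and bad vertices.} Call $u\neq y$ \emph{good} if it has a neighbor $w$ with $h(w)\ge h(u)$, and \emph{bad} otherwise. Every bad $u$ has all its neighbors strictly below it in $h$. If $u$ is good, the neighbor $w$ with $h(w)\ge h(u)$ contributes $(h(w)-h(u)+1)^2\ge 1$ to the sum defining $q(u)$. Hence $q(u)\ge 1/\deg(u)\ge 1/D$, and so
\[
\Var_x(\tau_y)\ \ge\ \frac1D\,\mu(\mathrm{good}).
\]

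\emph{Step 2: bad visits are dominated by good visits.} Let $u$ be bad and $v\sim u$ with $v\neq y$. Then $h(u)>h(v)$, so $u$ is a neighbor of $v$ lying above $v$, and therefore $v$ is good. Thus every step taken from a bad vertex lands either on a good vertex or on $y$. Along any trajectory up to $\tau_y$, each visit to a bad vertex is immediately followed by a visit to a good vertex, except possibly the last step, which enters $y$. This gives pathwise $\#\mathrm{bad}\le\#\mathrm{good}+1$. Taking expectations,
\[
\E_x\tau_y=\mu(\mathrm{bad})+\mu(\mathrm{good})\le 2\mu(\mathrm{good})+1 .
\]
Consequently $\Var_x(\tau_y)\ge(\E_x\tau_y-1)/(2D)$, which is at least $\E_x\tau_y/(3D)$ as soon as $\E_x\tau_y\ge 3$.

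\emph{Step 3: the small-mean regime, which I expect to be the main obstacle.} We must absorb the additive $1$ when $\E_x\tau_y<3$, and this is where non-determinism enters. If no good vertex is visited with positive probability, then $x$ is bad and every step from $x$ goes to $y$, so $\tau_y\equiv1$, which is excluded. I would first try sharpening the counting in Step 2. The $+1$ only arises on trajectories whose last vertex before $y$ is bad. If $x$ is good, then the first visit is good, and the bound becomes $\#\mathrm{bad}\le\#\mathrm{good}$ pathwise, giving $\E_x\tau_y\le 2\mu(\mathrm{good})$ and the claim with room to spare. If $x$ is bad, then $h(x)=1+\frac1{\deg x}\sum_{v\sim x}h(v)$. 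If moreover $\E_x\tau_y<3$, one checks directly that some neighbor $v$ of $x$ is good and is visited with probability at least $1/D$. One can then estimate $\Var_x(\tau_y)\ge\Pp_x(X_1=v)\,q(v)\ge 1/D^2$ or, better, use the contribution of $q(x)$ itself. Here $q(x)$ is the variance of $h(X_1)$ over neighbors, which is nonzero precisely because not all neighbors have the same $h$-value $h(x)-1$. If $\tau_y$ is deterministic from every neighbor, non-determinism forces a spread in $h$ among neighbors.

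The delicate point is getting the constant $3D$ rather than something like $D^2$ in this small case. If the crude bound fails, I would handle $\E_x\tau_y<3$ by splitting on whether $x\sim y$ and computing $q(x)$ explicitly in terms of the neighbor values.
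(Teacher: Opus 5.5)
Your Steps 1 and 2 are correct, and so is the refinement that $\#\mathrm{bad}\le\#\mathrm{good}$ pathwise when $x$ is good. But they only give $\Var_x(\tau_y)\ge(\E_x\tau_y-1)/(2D)$, which implies the lemma only when $\E_x\tau_y\ge3$. Step~3 does not close the remaining case ($x$ bad, $\E_x\tau_y<3$), and this case does occur. For example, let $x$ be adjacent to $y,z_1,z_2$ and each $z_i$ adjacent to $x,y$. Then $D=3$ and $h(x)=5/2>h(z_i)=9/4$, so $x$ is bad. Your estimate $\Pp_x(X_1=v)\,q(v)\ge 1/D^2$ falls below the target $\E_x\tau_y/(3D)\ge 1/(2D)$ as soon as $D\ge3$. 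Even $\mu(\mathrm{good})\ge\Pp_x(X_1\neq y)\ge\tfrac12$ only gives $1/(2D)$, which is short of the target once $\E_x\tau_y>3/2$. The contribution of $q(x)$ is never actually computed. So the argument is incomplete exactly in the regime you flagged.

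The missing idea is that the good/bad split discards what each step contributes to $q$ at its bad endpoint; this is where you lose the factor $2$. The paper keeps both endpoints. For adjacent $u,v\neq y$ with $a=h(u)-h(v)$, the edge gives $q(u)+q(v)\ge\bigl((a-1)^2+(a+1)^2\bigr)/D\ge 2/D$. Summing over consecutive pairs $(X_t,X_{t+1})$ with $t\le\tau_y-2$ counts each visit at most twice, so $\Var_x(\tau_y)\ge(\E_x\tau_y-1)/D$. Then one only needs $\E_x\tau_y\ge 3/2$, which follows from $\Pp_x(\tau_y\ge2)\ge\tfrac12$.

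Alternatively, your framework can be repaired without any case split. Let $S$ be the set of vertices $z\neq y$ other than leaves hanging on $y$. A minimizer $w$ of $h$ on $S$ has all neighbors except possibly $y$ in $S$, so $h(w)\ge 1+\tfrac{\deg w-1}{\deg w}h(w)$, i.e.\ $h(w)\ge\deg w$. Moreover $\deg w\ge2$, since a degree-one $w\in S$ would satisfy $h(w)=1+h(w')$ with $w'\in S$, contradicting minimality. Hence $h\ge2$ on $S$.

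Now take a bad vertex $u\sim y$ visited before $\tau_y$. It lies in $S$, because a leaf on $y$ can only be visited if it equals $x$, which forces $\tau_y\equiv1$. It cannot have degree $2$: its other neighbor $z\in S$ would give $h(u)=1+h(z)/2\le h(z)$, making $u$ good. So $\deg u\ge3$, and each bad visit is followed by a good visit with conditional probability at least $2/3$. This gives $\mu(\mathrm{good})\ge\tfrac23\mu(\mathrm{bad})$, hence $\E_x\tau_y\le\tfrac52\mu(\mathrm{good})$ and $\Var_x(\tau_y)\ge\tfrac{2}{5D}\E_x\tau_y$.
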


\begin{proof}
	Recall that $\E_u h(X_1)=h(u)-1$ for \(u\neq y\). Thus,
	\[
	q(u):=\Var_u(h(X_1))
	=
	\frac1{\deg(u)}
	\sum_{v\sim u}
	\bigl(h(u)-h(v)-1\bigr)^2.
	\]
	Let \(u,v\neq y\) be adjacent vertices, and set $a=h(u)-h(v)$. The edge \(\{u,v\}\) contributes at least
	\[
	\frac{(a-1)^2}{\deg(u)} \quad \mbox{and} \quad \frac{(a+1)^2}{\deg(v)}
	\]
	to \(q(u)\) and $q(v)$, respectively. Applying this to the edge \(\{X_t,X_{t+1}\}\), with \(a=h(X_t)-h(X_{t+1})\), shows that for any $t \leq \tau_y-2$ we have
	\[ q(X_t) + q(X_{t+1}) \geq \frac{(a-1)^2}{D} + \frac{(a+1)^2}{D} \geq \frac{2}{D}.\]
	In view of Lemma \ref{lem:martingale-variance-identity}, we get
	\[
	\Var_x(\tau_y)=\E_x\sum_{t<\tau_y}q(X_t) \geq \frac{1}{2} \E_x \sum_{t=0}^{\tau_y-2}
	\bigl(q(X_t)+q(X_{t+1})\bigr)
	\ge \E_x
	\frac{(\tau_y-1)}{D} = \frac{\E_x \tau_y - 1}{D}.
	\]
	It remains to remove the \(-1\). If \(\tau_y\) is not deterministic, then \(x\) has some neighbor different from $y$. Therefore,
	\[
	\Pp_x(\tau_y\geq 2)\ge \frac{1}{2},
	\]
	which clearly implies that $\E_x \tau_y \geq 3/2$. Combining this with the previous bound gives
	\[
	\Var_x(\tau_y)\ge \frac{\E_x\tau_y - 1}{D} \geq \frac{\E_x \tau_y}{3D}.\qedhere
	\]
\end{proof}

\begin{proof}[Proof of Corollary~\ref{thm:bounded-degree}]
	For convenience, write
	\[
	m:=\E_x\tau_y,
	\qquad
	V:=\Var_x(\tau_y),
	\qquad
	R:=2+\log \dist(x,y)+\log D.
	\]
	Applying Lemma~\ref{lem:linear} and Remark~\ref{rem:remarkmain}, we obtain
	\[
	(1+3D)V
	=
	V+3DV
	\ge
	V+m
	\ge
	\frac{2m^2}{R}.
	\]
	Since \(D\) is fixed, the term \(\log D\) can be absorbed into the constant. More precisely, it is easy to see that
	\[
	R
	\le
	(2+\log D)\bigl(1+\log(\dist(x,y)+1)\bigr) \leq 3(2+\log D) \log(\dist(x,y) +1),
	\]
	where the last inequality uses \(\dist(x,y)\ge1\). Therefore
	\[
	V
	\ge
	\frac{2}{3(1+3D)(2+\log D)}
	\frac{m^2}{\log(\dist(x,y)+1)}\geq \frac{1}{6D(2+\log D)} \frac{m^2}{\log(\dist(x,y)+1)}.\qedhere
	\]
\end{proof}

\section{A high-degree obstruction}\label{sec:funnel}

In this section we construct a family of high-degree graphs for which
\(\mathbb E_x\tau_y\) grows linearly while \(\operatorname{Var}_x(\tau_y)\)
remains bounded. This shows that
Corollary~\ref{thm:bounded-degree} cannot hold with a degree-independent constant.

Fix integers \(L\ge2\) and \(B\ge2\). Let \(G_{L,B}\) be the graph obtained from
the rooted \(B\)-ary tree of depth \(L\), with root \(x\), by adding a new vertex
\(y\) and connecting every depth-\(L\) leaf to \(y\). See Figure~\ref{fig:funnel}
for the case $L=3$ and \(B=2\).

\begin{figure}[b]
	\centering
	\begin{tikzpicture}[
		scale=0.62,
		vertex/.style={circle, draw, fill=white, inner sep=1.4pt, minimum size=5.5pt},
		target/.style={circle, draw, fill=white, inner sep=2pt, minimum size=7pt},
		levelnode/.style={circle, draw, fill=white, inner sep=2pt, minimum size=16pt},
		every node/.style={font=\small}
		]
		
		
		\node at (3.8,3.55) {\textbf{\(G_{3,2}\)}};
		
		\node[vertex,label=left:$x$] (r) at (0,0) {};
		
		\node[vertex] (a) at (1.7,1.35) {};
		\node[vertex] (b) at (1.7,-1.35) {};
		
		\node[vertex] (aa) at (3.4,2.05) {};
		\node[vertex] (ab) at (3.4,0.65) {};
		\node[vertex] (ba) at (3.4,-0.65) {};
		\node[vertex] (bb) at (3.4,-2.05) {};
		
		\node[vertex] (aaa) at (5.1,2.45) {};
		\node[vertex] (aab) at (5.1,1.75) {};
		\node[vertex] (aba) at (5.1,0.95) {};
		\node[vertex] (abb) at (5.1,0.25) {};
		\node[vertex] (baa) at (5.1,-0.25) {};
		\node[vertex] (bab) at (5.1,-0.95) {};
		\node[vertex] (bba) at (5.1,-1.75) {};
		\node[vertex] (bbb) at (5.1,-2.45) {};
		
		\node[target,label=right:$y$] (y) at (7.0,0) {};
		
		\draw (r) -- (a);
		\draw (r) -- (b);
		
		\draw (a) -- (aa);
		\draw (a) -- (ab);
		\draw (b) -- (ba);
		\draw (b) -- (bb);
		
		\draw (aa) -- (aaa);
		\draw (aa) -- (aab);
		\draw (ab) -- (aba);
		\draw (ab) -- (abb);
		\draw (ba) -- (baa);
		\draw (ba) -- (bab);
		\draw (bb) -- (bba);
		\draw (bb) -- (bbb);
		
		\draw (aaa) -- (y);
		\draw (aab) -- (y);
		\draw (aba) -- (y);
		\draw (abb) -- (y);
		\draw (baa) -- (y);
		\draw (bab) -- (y);
		\draw (bba) -- (y);
		\draw (bbb) -- (y);

		
		\node at (11.8,3.55) {\textbf{Level process}};
		
		\node[levelnode] (l0) at (9.0,0) {\(0\)};
		\node[levelnode] (l1) at (10.4,0) {\(1\)};
		\node[levelnode] (l2) at (11.8,0) {\(2\)};
		\node[levelnode] (l3) at (13.2,0) {\(3\)};
		\node[levelnode] (ly) at (14.6,0) {\(y\)};
		
		\draw (l0) -- (l1);
		\node at (9.7,0.25) {\scriptsize \(1\)};
		
		\draw (l1) to[bend left=18] (l2);
		\draw (l1) to[bend right=18] (l2);
		\node at (11.1,0.45) {\scriptsize \(2\)};
		
		\draw (l2) to[bend left=32] (l3);
		\draw (l2) to[bend left=12] (l3);
		\draw (l2) to[bend right=12] (l3);
		\draw (l2) to[bend right=32] (l3);
		\node at (12.5,0.65) {\scriptsize \(4\)};
		
		\draw (l3) to[bend left=32] (ly);
		\draw (l3) to[bend left=12] (ly);
		\draw (l3) to[bend right=12] (ly);
		\draw (l3) to[bend right=32] (ly);
		\node at (13.9,0.65) {\scriptsize \(4\)};

	\end{tikzpicture}
	\caption{The graph \(G_{3,2}\), together with a weighted multiedge representation
		of its level process. The edge multiplicities \(1,2,4,4\) give transition
		probabilities after normalization at each level.}
	\label{fig:funnel}
\end{figure}

The level process is a birth-death chain on
\[
\{0,1,\ldots,L,L+1\},
\]
where \(L+1\) corresponds to \(y\). From \(0\), the walk moves to \(1\)
deterministically. For \(1\le i\le L-1\), it moves to \(i+1\) with probability
\[
p:=\frac{B}{B+1}
\]
and to \(i-1\) with probability
\[
q:=\frac{1}{B+1}.
\]
From \(L\), it moves to \(L+1\) with probability \(1/2\), and to \(L-1\) with
probability \(1/2\).

Let \(T_0\) be the hitting time of level \(L\), starting from level \(0\). Similarly, let \(T_{L-1}\) be the hitting time of level \(L\), starting from level \(L-1\).

\begin{lemma}\label{lem:funnel-crossings}
	For $L\geq 2$ and $B\geq2$, consider the graph $G_{L,B}$ as above. Then
	\[
	\E T_0\le 3L,
	\qquad
	\Var(T_0)\le \frac{48L}{B},
	\]
	and
	\[
	\E T_{L-1}\le 3,
	\qquad
	\Var(T_{L-1})\le \frac{48}{B}.
	\]
\end{lemma}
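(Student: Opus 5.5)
The plan is to decompose $T_0$ into independent one-level crossing times and bound each crossing's mean and variance uniformly via a geometric-excursion representation. For $0\le i\le L-1$ let $\sigma_i$ be the time to first reach level $i+1$ starting from level $i$. By the strong Markov property the $\sigma_i$ are independent, and
\[
T_0=\sum_{i=0}^{L-1}\sigma_i,\qquad T_{L-1}=\sigma_{L-1}.
\]
Hence $\E T_0=\sum_i\E\sigma_i$ and $\Var(T_0)=\sum_i\Var(\sigma_i)$. It therefore suffices to prove
\[
\E\sigma_i\le 3,\qquad \Var(\sigma_i)\le \frac{48}{B}
\]
uniformly in $i$. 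The case $i=0$ is trivial: $\sigma_0=1$ deterministically, so its variance is $0$.

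I would establish the uniform bounds by induction on $i$. The transitions at levels $1\le i\le L-1$ all have the same law. From level $i$, the walk either steps up (probability $p$), or steps down (probability $q$), after which it needs time $\sigma_{i-1}'+\sigma_i''$, where these are independent copies. This gives the recursive distributional identity
\[
\sigma_i \stackrel{d}{=} 1+\xi\,(\sigma_{i-1}'+\sigma_i''),\qquad \xi\sim\mathrm{Bernoulli}(q)\ \text{independent}.
\]
Equivalently, $\sigma_i$ is $1$ plus a geometric number $N$ of excursions, with $\Pp(N\ge k)=q^k$, each excursion costing $1+\sigma_{i-1}$ in distribution.

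Writing $m_i=\E\sigma_i$ and $v_i=\Var\sigma_i$, the random-sum formulas give
\[
m_i=1+\frac{q}{p}(1+m_{i-1}),\qquad
v_i=\E N\,v_{i-1}+\Var(N)\,(1+m_{i-1})^2 .
\]
Here $q/p=1/B\le 1/2$. Starting from $m_0=1$, the mean recursion gives $m_i\le 3$ for all $i$, since $m\le 3$ implies $1+(1+3)/2=3$.

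The variance recursion is $v_i\le \frac{1}{B}v_{i-1}+\frac{q}{p^2}\cdot 16$. Using $q/p^2=(B+1)/B^2\le \frac{3}{2B}$, this becomes
\[
v_i\le \frac{v_{i-1}}{2}+\frac{24}{B}.
\]
Starting from $v_0=0$, induction yields $v_i\le 48/B$.

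The only nonuniform step is the top level $L$: there the up-probability is $1/2$ and the step goes to $y$. Since $T_0$ and $T_{L-1}$ stop upon reaching level $L$, the top level never enters these quantities, so the homogeneous recursion suffices for every $i\le L-1$.

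Summing the per-level bounds gives $\E T_0\le 3L$ and $\Var(T_0)\le 48L/B$. Taking $i=L-1$ gives the bounds for $T_{L-1}$.

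The main obstacle is getting the variance constant right. I expect this to require care with the random-sum variance formula (the geometric $N$ has $\E N=q/p$ and $\Var N=q/p^2$). It also depends on the bound $(1+m_{i-1})^2\le 16$, which relies on the mean bound having been proved first. If the constant $48$ fails, a slightly cruder induction hypothesis such as $v_{i-1}\le 48/B$ plugged back into the recursion still closes, because the contraction factor $1/B\le 1/2$ has room to spare.
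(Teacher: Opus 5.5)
Your proposal is correct and follows essentially the same route as the paper: it splits $T_0$ into independent one-level crossing times, writes each crossing as $1$ plus a geometric number of excursions costing $1+A_{i-1}$, and closes the mean and variance inductions with the compound-sum formula. The only cosmetic difference is that the paper bounds the variance step by $\frac1B\bigl(\Var A_{i-1}+24\bigr)$, whereas you use $\frac12 v_{i-1}+\frac{24}{B}$; both give $48/B$ by induction from $v_0=0$.
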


\begin{proof}
	For \(0\le i\le L-1\), let \(A_i\) be the hitting time of level \(i+1\) for the level chain started from level \(i\). We claim that
	\begin{equation}\label{eq:A_i}
		\E A_i \leq 3 \quad \mbox{and} \quad \Var A_i\leq \frac{48}{B} \quad \forall \, i=0,\ldots,L-1.
	\end{equation} 
	Clearly \(A_0=1\).  For \(i\ge1\), starting from level \(i\), each attempt to move
	to level \(i+1\) succeeds with probability $p=B/(B+1)$ and fails with probability $q=1/(B+1)$. On a failed attempt, the walk first moves to \(i-1\), then must return from
	\(i-1\) to \(i\), which takes an independent copy of \(A_{i-1}\), and then it
	tries again. Therefore
	\[
	A_i
	\stackrel{d}{=}
	1+\sum_{r=1}^{G_i}\bigl(1+A_{i-1}^{(r)}\bigr),
	\]
	where \(G_i\) is geometric with success parameter \(p\). From the representation above,
	\[
	\E A_i
	=1+\E(G_i)(1+\E A_{i-1})=
	1+\frac1B(1+\E A_{i-1}).
	\]
	Since \(\E A_0=1\) and \(B\ge2\), induction gives \(\E A_i\le3\) for all \(0\leq i \leq L-1\). For the variance, using the compound-sum formula and the previous estimate,
	\begin{align*}
	\Var A_i
	&=
	\E G_i\,\Var(1+A_{i-1})
	+
	\Var(G_i)\bigl(\E(1+A_{i-1})\bigr)^2 = \frac1B \Var(A_{i-1})
	+
	\frac{B+1}{B^2}(1+\E A_{i-1})^2 \\
	&\leq \frac{1}{B} (\Var(A_{i-1}) + 24)
	\end{align*}
	Since \(\Var A_0=0\) and $B\geq 2$, induction gives $\Var A_i \leq 48/B$ for all $0\leq i\leq L-1$, and the claim follows. Note that \(T_{L-1}=A_{L-1}\), and thus
	\[
	\E T_{L-1}\le 3,
	\qquad
	\Var(T_{L-1})\le \frac{48}{B}.
	\]
	Finally, by the strong Markov property,
	\[
	T_0 \stackrel{d}{=} A_0+A_1+\cdots+A_{L-1},
	\]
	with independent summands. Therefore, from (\ref{eq:A_i}) we get
	\[
	\E T_0=\sum_{i=0}^{L-1}\E A_i\le 3L \qquad \mbox{and} \qquad \Var(T_0)=\sum_{i=0}^{L-1}\Var(A_i)\le \frac{48L}{B}.\qedhere
	\]
\end{proof}

\begin{proof}[Proof of Proposition~\ref{prop:funnel}]
	For each \(L\ge2\), set \(B=L^4\). Let \(G_L=G_{L,B}\) be the graph defined as above. Recall that \(T_0\) denotes the hitting time of level \(L\)
	starting from level \(0\), and \(T_{L-1}\) is the hitting time of level \(L\)
	starting from level \(L-1\).
	
	After the walk first reaches level \(L\), each attempt to jump to \(y\) succeeds
	with probability \(1/2\). If the attempt fails, the walk moves to level \(L-1\)
	and must return to level \(L\). Hence, by the strong Markov property,
	\[
	\tau_y
	\stackrel{d}{=}
	T_0+1+\sum_{r=1}^{N}\bigl(1+T_{L-1}^{(r)}\bigr),
	\]
	where \(N\) is geometric with parameter \(1/2\), counting failures before the
	first success, and the variables on the right-hand side are independent. Taking expectation and using Lemma~\ref{lem:funnel-crossings},
	\[
	\E_x \tau_y = \E T_0 + 1 + \E(N)(1+\E(T_{L-1})) \leq 3L + 5.
	\]
	Since also \(\tau_y\ge L+1\), it follows that
	\[
	\E_x\tau_y\asymp L.
	\]
	For the variance, note that $T_0$, $N$, and $T_{L-1}^{(r)}$ are independent. Thus, using the compound-sum formula and Lemma \ref{lem:funnel-crossings} gives
	\begin{align*}
	\Var_x(\tau_y) &= \Var(T_0) +
	\Var\left(\sum_{r=1}^{N} (1+ T_{L-1}^{(r)})\right)
	= \Var(T_0) + \E N \Var(T_{L-1}) + \Var(N) (1+\E T_{L-1})^2\\
	&= O\left(\frac{L}{B}\right)+O\left(\frac1B\right)+O(1)
	=
	O(1).
	\end{align*}
	It remains to estimate the size of \(G_L\). Since \(G_L\) consists of the rooted
	\(B\)-ary tree of depth \(L\), together with the additional vertex \(y\),
	\[
	|V_L|
	=
	1+\sum_{r=0}^{L}B^r
	\asymp B^L.
	\]
	Because \(B=L^4\), we get
	\[
	\log |V_L|
	\asymp
	L\log B
	\asymp
	L\log L.\qedhere
	\]
\end{proof}

\begin{remark}\label{rem:local-counterexample}
	Norris--Peres--Zhai conjectured \cite[Conjecture~6.1]{norris2017surprise} that the general
	logarithmic bound for surprise probabilities could be improved to a square-root
	logarithmic bound, namely that
	\begin{equation}\label{eq:conjecture}
	\Pp_x(\tau_y=t)
	\le
	C\frac{\sqrt{1+\log |V|}}{t}
	\end{equation}
	uniformly over finite graphs and all choices of \(x,y,t\). Such an estimate
	would imply, by a standard Chebyshev argument, a variance lower bound of the form
	\[
	\Var_x(\tau_y)
	\ge
	c\frac{(\E_x\tau_y)^2}{1+\log |V|}.
	\]
	Proposition~\ref{prop:funnel} shows that this variance consequence is false
	without a bounded-degree assumption.
	
	In fact, the same example gives a direct counterexample to the local estimate.
	For the graphs \(G_L=G_{L,L^4}\), the shortest possible hitting time from $x$ to $y$ is \(L+1\).
	This occurs exactly when the walk moves away from the root at every internal
	level and then jumps from level \(L\) to \(y\). Therefore
	\[
	\Pp_x(\tau_y=L+1)
	=
	\frac12\left(\frac{B}{B+1}\right)^{L-1}.
	\]
	With \(B=L^4\), this probability tends to \(1/2\). Hence
	\[
	\sup_t t\,\Pp_x(\tau_y=t)
	\ge
	(L+1)\Pp_x(\tau_y=L+1)
	\ge
	(1-o(1))\frac L2.
	\]
	On the other hand, $\sqrt{\log |V_L|}\asymp \sqrt{L\log L}$, and thus
	\[
	\frac{\sup_t t\,\Pp_x(\tau_y=t)}
	{\sqrt{1+\log |V_L|}}
	\to\infty.
	\]
	Consequently, the square-root logarithmic local estimate cannot hold over all
	finite graphs without an additional hypothesis, such as bounded degree. Whether (\ref{eq:conjecture}) holds under a bounded degree assumption remains open.
\end{remark}

\section{Proof of Corollary \ref{theo:tree}}\label{sec:8}

We will use the same martingale and
flow identities as above, but on a tree the current has a particularly simple
form. Let $x\neq y$ be vertices of a tree $T$ and let
\[
x=u_0,u_1,\ldots,u_\ell=y
\]
be the geodesic from \(x\) to \(y\), so that \(\ell=\dist(x,y)\).  Since \(T\) is
a tree, the unit flow $\theta_g$ flows only along this geodesic.
With the normalization \(g(y)=0\), it is standard that
\[
g(u_j)=\ell-j,
\qquad j=0,\ldots,\ell.
\]
Moreover, \(g\) is constant on every component attached to the geodesic.

We first show that the variance dominates the expectation.

\begin{lemma}\label{lem:tree-linear}
	Let $x\neq y$ be vertices of a tree $T$. If \(\tau_y\) is not deterministic, then
	\[
	\Var_x(\tau_y)\ge 2\E_x\tau_y.
	\]
\end{lemma}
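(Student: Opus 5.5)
The plan is to use the edge identity of Lemma~\ref{lem:variance-edge-identity} together with a matching edge expression for \(\E_x\tau_y\), and then compare the two sums edge by edge. For each edge \(e=\{u,v\}\), put \(w_e:=g(u)+g(v)\) and \(d_e:=|h(u)-h(v)|\). Since \(\mu(u)=\deg(u)g(u)\) and \(g(y)=0\),
\[
\E_x\tau_y=\sum_{u\neq y}\mu(u)=\sum_{u}\deg(u)g(u)=\sum_{e\in E}w_e .
\]
Lemma~\ref{lem:variance-edge-identity} then reads \(\Var_x(\tau_y)+\E_x\tau_y=\sum_e w_e d_e^2\). The claim \(\Var_x(\tau_y)\ge 2\E_x\tau_y\) is therefore equivalent to
\[
\sum_{e\in E} w_e\,(d_e^2-3)\ \ge\ 0 .
\]

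\textbf{Step 1: compute \(d_e\) on a tree.} Orient each edge \(e=\{u,v\}\) so that \(v\) lies on the side of \(y\). Then \(h(u)-h(v)=\E_u\tau_v=2|E(T_u)|+1\), where \(T_u\) is the component of \(u\) in \(T\setminus e\). So \(d_e\) is an odd integer. We have \(d_e=1\) exactly when \(u\) is a leaf on the far side of \(e\), and otherwise \(d_e\ge 3\). More precisely, if \(u\) has \(k\) leaf neighbours away from \(y\), then \(d_e\ge 2k+1\). The weights are explicit from the description before the lemma: \(g\) is constant on components hanging off the geodesic and equals \(\ell-j\) at \(u_j\). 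Thus \(w_e=2g(v)\) for edges off the geodesic, and \(w_e=2(\ell-j)-1\) for the geodesic edge \(\{u_j,u_{j+1}\}\). Edges on the far side of \(y\) have weight \(0\) and can be ignored.

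\textbf{Step 2: charging.} Only leaf edges (\(d_e=1\)) contribute negatively, namely \(-2w_e\). I will charge each leaf edge \(\{\ell',v\}\), with \(\ell'\) the leaf, to the edge \(e'\) from \(v\) toward \(y\). Each non-leaf edge is then charged only by the leaf edges hanging at its lower endpoint. Suppose \(v\) has \(k\) such leaves. Then \(d_{e'}\ge 2k+1\), and \(d_{e'}\ge 3\) whenever \(k\ge1\).

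\emph{Case \(v\) off the geodesic.} All the weights involved equal \(2g(v)\), so it suffices that \(d_{e'}^2-3\ge 2k\). This holds since \((2k+1)^2-3\ge 2k\) for \(k\ge1\).

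\emph{Case \(v=u_j\) with \(1\le j<\ell\).} Each leaf edge has weight \(2(\ell-j)\), while \(w_{e'}=2(\ell-j)-1\ge \ell-j\). So it suffices that \((\ell-j)(d_{e'}^2-3)\ge 4k(\ell-j)\), i.e. \((2k+1)^2-3\ge 4k\), which is true for \(k\ge1\).

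\textbf{Step 3: the endpoint \(x\) (main obstacle).} The remaining, and I expect the only delicate, case is when \(x\) itself is a leaf. Then the geodesic edge \(\{x,u_1\}\) has \(d=1\) and weight \(2\ell-1\). This is exactly where nondeterminism enters.

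If \(\ell=1\), then \(x\) is a leaf adjacent to \(y\) and \(\tau_y\equiv1\), which is excluded.

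If \(\ell\ge2\), I charge \(\{x,u_1\}\) to \(\{u_1,u_2\}\), which has weight \(2\ell-3\). Let \(k\) count the off-geodesic leaves at \(u_1\). Then \(T_{u_1}\) contains \(x\) and those \(k\) leaves, so \(d\ge 2(k+1)+1\). The total charge to be absorbed is
\[
2(2\ell-1)+4k(\ell-1).
\]
Since \((2k+3)^2-3\ge 6+4k+\dots\), a short computation shows this is covered for all \(\ell\ge2\). The base case \(k=0\) reads \(6(2\ell-3)\ge 2(2\ell-1)\), which is exactly \(\ell\ge2\).

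Summing these nonnegative contributions over all edges gives \(\sum_e w_e(d_e^2-3)\ge0\), which proves the lemma.
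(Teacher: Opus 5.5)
Your proof is correct, but it takes a different route from the paper.

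\textbf{The paper's route.} The paper never uses the global identity $\E_x\tau_y=\sum_e w_e$. Instead it writes the mean as a telescoping sum along the geodesic, $\E_x\tau_y=\sum_{j=0}^{\ell-1}\delta_j$ with $\delta_j=h(u_j)-h(u_{j+1})$. It then discards every off-geodesic edge from the identity of Lemma~\ref{lem:variance-edge-identity}, since those terms are nonnegative. What remains is a term-by-term comparison $(2(\ell-j)-1)\delta_j^2-\delta_j\ge 2\delta_j$:
\begin{itemize}
\item for $j\le\ell-2$ it uses weight $\ge 3$ and $\delta_j\ge1$;
\item for the last edge $\{u_{\ell-1},y\}$, of weight $1$, it uses $\delta_{\ell-1}\ge 3$, which is where nondeterminism enters.
\end{itemize}
Since the mean is attributed only to geodesic edges, leaves never produce negative terms and no charging is needed.

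\textbf{Your route.} You pair the variance identity with $\E_x\tau_y=\sum_e w_e$. This gives the exact formula $\Var_x(\tau_y)=\sum_e w_e(d_e^2-1)$, valid on any graph. The price is that you must pay for every positive-weight edge, including off-geodesic leaf edges with $d_e=1$; hence the charging scheme. Nondeterminism then appears at the first edge $\{x,u_1\}$ when $x$ is a leaf, rather than at the last edge. Your approach is longer. What it buys is an exact, all-edges accounting of where the variance sits, rather than a lower bound that simply throws away the off-geodesic part.

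\textbf{Two small points to tidy.}
\begin{itemize}
\item Leaf neighbours of a non-leaf $x$ are charged to $\{x,u_1\}$, which is the case $j=0$. Your case $v=u_j$ is stated only for $1\le j<\ell$. The same computation covers $j=0$, since $w_{e'}=2\ell-1\ge\ell$.
\item The ``short computation'' in Step~3 should be written out. The needed inequality $(2\ell-3)(4k^2+12k+6)\ge 2(2\ell-1)+4k(\ell-1)$ follows from $6(2\ell-3)\ge 2(2\ell-1)$ and $12k(2\ell-3)\ge 4k(\ell-1)$, both valid for $\ell\ge2$.
\end{itemize}
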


\begin{proof}
	Recall that $h(u)=\E_u(\tau_y)$ and write $\delta_j = h(u_{j})-h(u_{j+1})$ for $j=0,\ldots,\ell-1$. Decomposing $\tau_y$ as a sum of hitting times across the geodesic, we see that
	\[
	\E_x\tau_y= \sum_{j=0}^{\ell-1} \E_{u_{j}} (\tau_{u_{j+1}}) = \sum_{j=0}^{\ell-1} \delta_j
	\]
	Using Lemma~\ref{lem:variance-edge-identity} and keeping only the geodesic
	edges, we get
	\[
	\Var_x(\tau_y)+\E_x\tau_y
	\ge
	\sum_{j=0}^{\ell-1}
	\bigl(g(u_{j})+g(u_{j+1})\bigr)\delta_j^2.
	\]
	Since $T$ is a tree,
	\[
	g(u_{j})+g(u_{j+1})
	=
	(\ell-j)+(\ell-j-1)
	=
	2(\ell-j)-1.
	\]
	We obtain
	\[
	\Var_x(\tau_y) \geq \sum_{j=0}^{\ell-1} \left ((2(\ell-j)-1)\delta_j^2 - \delta_j\right ).
	\]
	For \(j\leq\ell-2\), we have \(2(\ell-j)-1\ge3\). Since \(\delta_j\ge1\), it follows that
	\[
	\bigl(2(\ell-j)-1\bigr)\delta_j^2-\delta_j
	\ge
	3\delta_j^2-\delta_j
	\ge
	2\delta_j.
	\]
	For the final edge \(j=\ell-1\), if \(\tau_y\) is not deterministic, then
	\(u_{\ell-1}\) has at least one neighbor different from \(y\).  Starting from
	\(u_{\ell-1}\), the walk makes a geometric number of failed attempts before
	stepping to \(y\), and each failed attempt costs at least two steps. Hence
	\[
	\delta_{\ell-1}=\E_{u_{\ell-1}}\tau_y\ge 3.
	\]
	Therefore
	\[
	\delta_{\ell-1}^2-\delta_{\ell-1}\ge 2\delta_{\ell-1}.
	\]
	Combining the estimates for all \(j\), we get
	\[
	\Var_x(\tau_y)
	\ge
	2\sum_{j=0}^{\ell-1}\delta_j
	=
	2\E_x\tau_y.\qedhere
	\]
\end{proof}

\begin{proof}[Proof of Corollary~\ref{theo:tree}]
	Let $x=u_0, u_1,\ldots, u_\ell=y$ be the geodesic from \(x\) to \(y\), so that \(\ell=\dist(x,y)\).
	Looking at the proof of Lemma~\ref{lem:entropy-current-bound}, equations
	\eqref{eq:entropy-reduction} and \eqref{eq: 10.2} give
	\[
	\sum_{\substack{\{u,v\}\in E\\ u,v\in C}}
	\frac{(g(u)-g(v))^2}{g(u)+g(v)}
	\le
	\frac12\left(
	\log g(x)
	-
	\sum_{\substack{u\sim y\\ u\in C}}g(u)\log g(u)
	\right).
	\]
	Since \(T\) is a tree, the unit flow $\theta_g$ flows only along
	the geodesic. Hence
	\[
	g(x)=R_{\mathrm{eff}}(x,y)=\dist(x,y)=\ell.
	\]
	Moreover, the only nonzero coordinate of \((g(u))_{u\sim y}\) is
	\(g(u_{\ell-1})=1\). Therefore
	\[
	\sum_{\substack{u\sim y\\ u\in C}}g(u)\log g(u)=0.
	\]
	Adding the boundary contribution from \eqref{eq:boundary} and using that $\ell\geq1$, we get
	\[
	\sum_{\{u,v\}\in E}
	\frac{(g(u)-g(v))^2}{g(u)+g(v)}
	\le
	1+\frac12\log \ell
	\le
	\frac{\log(\ell+1)}{\log 2}.
	\]
	Write
	\[
	m:=\E_x\tau_y,
	\qquad
	V:=\Var_x(\tau_y),
	\qquad
	L:= \frac{\log(\ell+1)}{\log 2}.
	\]
	Recall that Lemma~\ref{lem:tree-linear} gives $V\geq 2m$. Hence, by the Cauchy--Schwarz argument used in Theorem~\ref{thm:general-plus-mean},
	\[
	\frac32 V\ge V+m\ge \frac{m^2}{L}.
	\]
	Therefore
	\[
	V\ge \frac23\frac{m^2}{L}.\qedhere
	\]
\end{proof}

\appendix

\section{A generalization for reversible Markov chains}\label{app:reversible}

In this section, we record a weighted-network version of the argument behind Theorem \ref{thm:general-plus-mean}. This is the natural form of the proof for finite reversible Markov chains.

Let \(X_t\) be an irreducible reversible Markov chain taking values on a finite state space
\(\Omega\) with stationary measure \(\pi\) and transition matrix $P$. For \(u\neq v \in \Omega\), define the
conductance
\[
c(u,v):=\pi(u)P(u,v)=\pi(v)P(v,u).
\]
Fix a starting point \(x \in \Omega\) and a target point $y \in \Omega$ with $x\neq y$. For $u \in \Omega$, define
\[
h(u)=\E_u \tau_y,
\qquad
\mu(u):=\E_x\#\{0\le t<\tau_y:X_t=u\},
\qquad
g(u):=\frac{\mu(u)}{\pi(u)}.
\]
In particular, $h(y)=0$ and $g(y)=0$. For an oriented edge \((u,v)\), the expected number of crossings from \(u\) to
\(v\) before time \(\tau_y\) is
\[
F(u,v)=\mu(u)P(u,v)=c(u,v)g(u).
\]
For any stopped path from \(x\) to \(y\), the net number of exits from a vertex
\(u\) minus the number of entrances into \(u\) is $\mathbf 1_{\{u=x\}}-\mathbf 1_{\{u=y\}}$. Taking expectations gives
\[
\mathbf 1_{\{u=x\}}-\mathbf 1_{\{u=y\}} = \sum_{v \neq u} \bigl(F(u,v)-F(v,u)\bigr)
= \sum_{v\neq u} c(u,v)(g(u)-g(v))
\]
In other words, 
\[
\theta(u,v):=
c(u,v)(g(u)-g(v))
\]
is a unit flow from \(x\) to \(y\). Thus \(g\) is the unit-current voltage in the conductance network \(c\), normalized by \(g(y)=0\). In particular,
\[
g(x)=R_{\mathrm{eff}}(x,y),
\]
where \(R_{\mathrm{eff}}(x,y)\) denotes effective resistance in this weighted
network. First, we obtain weighted versions of Lemmas \ref{lem:mean-current-identity} and \ref{lem:variance-edge-identity}.

\begin{lemma}\label{lem:rev-mean-var}
	Let \(P\) be a finite irreducible reversible Markov chain with state space $\Omega$, and let $x$, $y \in \Omega$. Then,
	\begin{equation}\label{eq:rev-mean}
		\E_x\tau_y
		=
		\sum_{\{u,v\}}
		c(u,v)(g(u)-g(v))(h(u)-h(v)),
	\end{equation}
	and
	\begin{equation}\label{eq:rev-var}
		\Var_x(\tau_y)+\E_x\tau_y
		=
		\sum_{\{u,v\}}
		c(u,v)(g(u)+g(v))(h(u)-h(v))^2.
	\end{equation}
	Here the sums are over unordered pairs \(\{u,v\}\) with \(u\neq v\) and
	\(c(u,v)>0\).
\end{lemma}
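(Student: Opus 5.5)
The plan is to mirror the proofs of Lemmas~\ref{lem:mean-current-identity} and \ref{lem:variance-edge-identity}, replacing degrees by \(\pi\) and unit edge weights by conductances. For \eqref{eq:rev-mean}, we already know that \(\theta(u,v)=c(u,v)(g(u)-g(v))\) is a unit flow from \(x\) to \(y\). The summation-by-parts identity \eqref{eq:summation-by-parts} holds verbatim for any antisymmetric \(\theta\) on the weighted edge set. Applying it with \(f=h\) gives
\[
\sum_{\{u,v\}}c(u,v)(g(u)-g(v))(h(u)-h(v))=h(x)-h(y)=\E_x\tau_y .
\]

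For \eqref{eq:rev-var}, I first verify the Poisson equation \(\E_u h(X_1)=h(u)-1\) for \(u\neq y\). If \(P(u,u)>0\), this sum includes the holding term. With this, Lemma~\ref{lem:hitting-time-martingale} and Lemma~\ref{lem:martingale-variance-identity} go through unchanged, since they only used the Markov property, the Poisson equation and finiteness. So
\[
\Var_x(\tau_y)=\sum_{u\neq y}\mu(u)q(u),\qquad q(u)=\sum_{v}P(u,v)\bigl(h(v)-h(u)+1\bigr)^2 .
\]
Using \(\mu(u)P(u,v)=c(u,v)g(u)\) turns this into \(\sum_{u\neq y}\sum_v c(u,v)g(u)(h(v)-h(u)+1)^2\). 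I then expand the square into three terms.
\begin{itemize}
\item The constant term is \(\sum_{u\neq y}\mu(u)\sum_v P(u,v)=\E_x\tau_y\).
\item The cross term is \(-2\sum_u\sum_v c(u,v)g(u)(h(u)-h(v))\). By symmetry of \(c\), grouping the two orientations of each pair gives \(-2\sum_{\{u,v\}}c(u,v)(g(u)-g(v))(h(u)-h(v))\), which equals \(-2\E_x\tau_y\) by \eqref{eq:rev-mean}.
\item The quadratic term is \(\sum_{\{u,v\}}c(u,v)(g(u)+g(v))(h(u)-h(v))^2\), again by grouping orientations.
\end{itemize}
Adding \(\E_x\tau_y\) to both sides then yields \eqref{eq:rev-var}.

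The main point needing care is the handling of diagonal terms \(v=u\) and of \(u=y\). When \(u=v\), the contribution is \(\mu(u)P(u,u)\cdot 1\). This is included in the constant term but vanishes in the cross and quadratic terms, since \(h(u)-h(u)=0\). This is consistent with the sums in the statement running over \(u\neq v\). Terms with \(u=y\) vanish because \(g(y)=0\), so the restriction \(u\neq y\) can be dropped when regrouping into unordered pairs. I expect no genuine obstacle beyond this bookkeeping.
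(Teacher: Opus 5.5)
Your proposal is correct and follows the paper's proof essentially step for step. You use summation by parts against the unit flow \(\theta\) for the mean identity, then the martingale variance decomposition \(\Var_x(\tau_y)=\sum_{u\neq y}\mu(u)\Var_u(h(X_1))\), expansion of the square, and regrouping of the two orientations using \(\mu(u)P(u,v)=c(u,v)g(u)\). The only cosmetic difference is how you evaluate the cross term: you use the mean identity \eqref{eq:rev-mean}, as the paper does in the unweighted Lemma~\ref{lem:variance-edge-identity}, whereas the appendix proof uses \(\sum_v P(u,v)(h(u)-h(v))=1\) for \(u\neq y\) directly; both give \(\E_x\tau_y\).
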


\begin{proof}
	We showed above that \(\theta\) is a unit flow from \(x\) to \(y\). The summation-by-parts identity (\ref{eq:summation-by-parts}) applied to the function $h$ gives
	\[
	\sum_{\{u,v\}}
	c(u,v)(g(u)-g(v))(h(u)-h(v))
	=
	h(x)-h(y).
	\]
	Since \(h(x)=\E_x\tau_y\) and \(h(y)=0\), this proves
	\eqref{eq:rev-mean}.
	
	It remains to prove the variance identity.  The martingale variance identity
	from Lemma~\ref{lem:martingale-variance-identity} applies verbatim to any finite
	Markov chain, and gives
	\[
	\Var_x(\tau_y)
	=
	\sum_{u\neq y}\mu(u)\Var_u(h(X_1)).
	\]
	Recall that for any $u\neq y$ we have $\E_u h(X_1)=h(u)-1$, and thus
	\[
	\Var_u(h(X_1))
	=
	\sum_{v \in \Omega} P(u,v)\bigl(h(u)-h(v)-1\bigr)^2.
	\]
	Adding over $u\neq y$ and expanding the square gives
	\begin{align*}
		\Var_x(\tau_y)
		&=
		\sum_{u\neq y}\sum_{v \in \Omega}
		\mu(u)P(u,v)(h(u)-h(v))^2
		-2\sum_{u\neq y}\sum_{v \in \Omega}
		\mu(u)P(u,v)(h(u)-h(v))
		+
		\sum_{u\neq y}\mu(u).
	\end{align*}
	The last term is \(\E_x\tau_y\). The middle term is also \(\E_x\tau_y\) because for any $u\neq y$,
	\[
	\sum_{v \in \Omega}P(u,v)(h(u)-h(v))=1.
	\]
	For the first term, self-loops contribute nothing, and for \(u\neq v\),
	\[
	\mu(u)P(u,v)=g(u)\pi(u)P(u,v)=g(u)c(u,v).
	\]
	Grouping the two orientations of each edge gives
	\[
	\sum_{u\neq y}\sum_v
	\mu(u)P(u,v)(h(u)-h(v))^2
	=
	\sum_{\{u,v\}}
	c(u,v)(g(u)+g(v))(h(u)-h(v))^2.
	\]
	Combining the three terms proves \eqref{eq:rev-var}.
\end{proof}

\begin{theorem}\label{theo:reversible}
	Let \(P\) be a finite irreducible reversible Markov chain with state space $\Omega$, and let $x$, $y\in \Omega$ with $x \neq y$. Set
	\[
	C_y:=\sum_{u\neq y}c(u,y)=\pi(y)(1-P(y,y)).
	\]
	Then
	\[
	\Var_x(\tau_y)+\E_x\tau_y
	\ge
	\frac{(\E_x\tau_y)^2}
	{1+\frac12\log\!\left(R_{\mathrm{eff}}(x,y)C_y\right)}.
	\]
\end{theorem}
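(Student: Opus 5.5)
The plan is to repeat the Cauchy--Schwarz argument from the proof of Theorem~\ref{thm:general-plus-mean}, now in the conductance network $c$. By \eqref{eq:rev-mean}, $\E_x\tau_y=\sum_{\{u,v\}}c(u,v)(g(u)-g(v))(h(u)-h(v))$. Writing each summand as
\[
\frac{c(u,v)^{1/2}(g(u)-g(v))}{(g(u)+g(v))^{1/2}}\cdot c(u,v)^{1/2}(g(u)+g(v))^{1/2}(h(u)-h(v))
\]
and applying Cauchy--Schwarz gives $(\E_x\tau_y)^2\le S\cdot(\Var_x(\tau_y)+\E_x\tau_y)$ by \eqref{eq:rev-var}, where
\[
S:=\sum_{\{u,v\}}c(u,v)\frac{(g(u)-g(v))^2}{g(u)+g(v)},
\]
with terms where $g(u)=g(v)=0$ contributing $0$. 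Hence it suffices to prove the weighted entropy bound $S\le 1+\frac12\log(R_{\mathrm{eff}}(x,y)C_y)$.

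For this I will follow the proof of Lemma~\ref{lem:entropy-current-bound}. Let $C=\{u:g(u)>0\}$. If $u\neq y$ has $c(u,w)>0$ for some $w\in C$, then $u$ is reachable from $x$ before $\tau_y$, so $u\in C$. Thus the only edges leaving $C$ go to $y$. Their contribution is $\sum_{u\in C}c(u,y)g(u)=-\operatorname{div}\theta(y)=1$. For edges inside $C$, I will apply \eqref{eq:ab-ineq} and multiply by $c(u,v)\ge0$. Then I use summation by parts with the unit flow $\theta$ and the test function $G=\log g$ on $C$, $G=0$ off $C$. This yields
\[
\sum_{u,v\in C}c(u,v)(g(u)-g(v))(\log g(u)-\log g(v))=\log g(x)-\sum_{u\in C}c(u,y)g(u)\log g(u).
\]

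The step I expect to be the main obstacle is bounding the entropy-like term, since $(g(u))_{u\sim y}$ is no longer a probability vector. Instead, the weights $p_u:=c(u,y)g(u)$ sum to $1$. Writing $\log g(u)=\log p_u-\log c(u,y)$ gives
\[
-\sum_u p_u\log g(u)=-\sum_u p_u\log p_u+\sum_u p_u\log c(u,y)=\sum_u p_u\log\frac{c(u,y)}{p_u}.
\]
By Jensen's inequality for the concave logarithm, this is at most $\log\sum_{u}c(u,y)\le\log C_y$. (The sum runs over $u\in C$, and including the remaining neighbours of $y$ only increases it.)

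Combining this with $g(x)=R_{\mathrm{eff}}(x,y)$ gives the interior contribution $\le\frac12\log(R_{\mathrm{eff}}(x,y)C_y)$. Adding the boundary contribution $1$ gives $S\le 1+\frac12\log(R_{\mathrm{eff}}(x,y)C_y)$, and the theorem follows after dividing. A small side check is that the denominator is positive: the derived bound on $S$ is automatically at least the boundary contribution $1$, so $\log(R_{\mathrm{eff}}C_y)\ge$ something harmless. Alternatively, one notes $R_{\mathrm{eff}}(x,y)C_y\ge1$, because the effective conductance is at most the total conductance at $y$. Finally, the identity $C_y=\pi(y)(1-P(y,y))$ follows immediately from $\sum_{u}P(y,u)=1$.
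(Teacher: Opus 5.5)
Your proposal is correct and follows the paper's proof essentially step for step. You apply Cauchy--Schwarz against the weighted variance identity, use the boundary contribution $1$ at $y$, apply \eqref{eq:ab-ineq} with summation by parts for $G=\log g$ on $C$, and bound $-\sum_u c(u,y)g(u)\log g(u)\le\log C_y$; your Jensen step is exactly the paper's Gibbs' inequality with reference measure $c(u,y)/C_y$, and your check of a positive denominator (via $S\ge1$ or $R_{\mathrm{eff}}(x,y)C_y\ge1$) matches the paper's subsequent remark.
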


\begin{proof}
	The proof is the same as in the unweighted case, with each edge weighted by its
	conductance. Applying Cauchy--Schwarz to \eqref{eq:rev-mean} we get
	\begin{equation}\label{eq:c-s}
	(\E_x\tau_y)^2
	\le
	L
	\bigl(\Var_x(\tau_y)+\E_x\tau_y\bigr),
	\end{equation}
	where
	\[
	L:=
	\sum_{\{u,v\}}
	c(u,v)\frac{(g(u)-g(v))^2}{g(u)+g(v)},
	\]
	where terms with \(g(u)=g(v)=0\) are understood to contribute \(0\). Thus it remains to bound \(L\). 
	
	Let $C:=\{u\in\Omega \colon g(u)>0\}$ and let $N_y=\{u \in \Omega \colon u\neq y, c(u,y)>0\}$ be the set of neighbors of $y$ different from $y$. If \(u\neq y\) is adjacent to a point
	of \(C\), then \(u\) is reachable from \(x\) before hitting \(y\), and hence
	\(g(u)>0\). Thus the only boundary edges of \(C\) are the edges from \(C\) to
	\(y\). The boundary contribution at \(y\) is
	\[
	\sum_{u \in C \cap N_y}
	c(u,y)\frac{(g(u)-g(y))^2}{g(u)+g(y)}
	=
	\sum_{u \in C \cap N_y} c(u,y)g(u)
	=
	1,
	\]
	because the total current entering \(y\) is one. For edges inside \(C\), applying the inequality (\ref{eq:ab-ineq}) with $a=g(u)$ and $b=g(v)$ we obtain
	\[
	\sum_{\substack{\{u,v\}\\u,v\in C}}
	c(u,v)\frac{(g(u)-g(v))^2}{g(u)+g(v)}
	\le
	\frac12
	\sum_{\substack{\{u,v\}\\u,v\in C}}
	c(u,v)(g(u)-g(v))(\log g(u)-\log g(v)).
	\]
	Consider the function
	\[
	G(z)=
	\begin{cases}
		\log g(z) & \mbox{if } z\in C;\\
		0 & \mbox{if } z\notin C.
	\end{cases}
	\]
	Since $\theta(u,v)=c(u,v)(g(u)-g(v))$ is a unit flow from \(x\) to \(y\), the summation-by-parts identity (\ref{eq:summation-by-parts}) applied to $G$ yields
	\[
	\sum_{\{u,v\}}
	c(u,v)(g(u)-g(v))(G(u)-G(v))
	=
	G(x)-G(y)
	=
	\log g(x).
	\]
	The only boundary edges of \(C\) are the edges from \(C\) to \(y\), and therefore
	\[
	\sum_{\substack{\{u,v\}\\u,v\in C}}
	c(u,v)(g(u)-g(v))(\log g(u)-\log g(v))
	=
	\log g(x)-\sum_{u \in C \cap N_y}c(u,y)g(u)\log g(u).
	\]
	After combining this estimate with the boundary contribution at $y$, we obtain
	\[L
	\le
	1+\frac12\left(
	\log g(x)-\sum_{u \in C \cap N_y}c(u,y)g(u)\log g(u)
	\right).
	\]
	On the one hand, since $g$ is the unit voltage and $g(y)=0$, we deduce that $g(x)=R_{\mathrm{eff}}(x,y)$. On the other hand, using the usual convention $0 \log 0 = 0$, Gibbs' inequality applied to the probability vectors
	\[
	\nu(u)=c(u,y)g(u),
	\qquad
	\rho(u)=\frac{c(u,y)}{C_y}
	\qquad \forall \, u \in N_y,
	\]
	shows that
	\[
	0\le \sum_{u \in N_y}\nu(u)\log\frac{\nu(u)}{\rho(u)} =\sum_{u \in C \cap N_y}\nu(u)\log\frac{\nu(u)}{\rho(u)}
	=
	\sum_{u \in C \cap N_y}\nu(u)\log\nu(u)
	-
	\sum_{u \in C \cap N_y}\nu(u)\log c(u,y)
	+
	\log C_y.
	\]
	Thus
	\[
	-\sum_{u \in C \cap N_y}\nu(u)\log g(u)\le \log C_y.
	\]
	We conclude that
	\[
	L
	\le
	1+\frac12\log\!\left(R_{\mathrm{eff}}(x,y)C_y\right).
	\]
	Combining this with the Cauchy--Schwarz estimate (\ref{eq:c-s}) proves the theorem.
\end{proof}

\begin{remark}
	The quantity \(R_{\mathrm{eff}}(x,y)C_y\) measures how much larger the full resistance from $x$ to $y$ is than the unavoidable final resistance of entering $y$. It also has a probabilistic interpretation (see \cite[Proposition 9.5]{levin2017markov}): If $\tau_y^+:=\inf\{t\ge1:X_t=y\}$, then
	\[
	\frac{1}{R_{\mathrm{eff}}(x,y)C_y}
	=
	\mathbb P_y(\tau_x<\tau_y^+ \mid X_1\neq y).
	\]
	Thus \(R_{\mathrm{eff}}(x,y)C_y\) is the inverse probability that an excursion
	from \(y\), conditioned to leave \(y\), reaches \(x\) before returning to \(y\).
	In particular \(R_{\mathrm{eff}}(x,y)C_y\ge1\).
\end{remark}

\section{Failure of the interval conjecture}\label{app:interval-conjecture}

Norris--Peres--Zhai record the following interval conjecture of Holroyd
\cite[Conjecture~6.2]{norris2017surprise}. It asks whether, for a Markov chain with \(n\) states,
\begin{equation}\label{eq:interval-conjecture}
	\mathbb P_x(t\le \tau_y\le t+n)
	\le
	C\frac{n}{t},
	\qquad t>n,
\end{equation}
with a universal constant \(C\). We show that this fails even for simple random
walk on bounded-degree trees.

\begin{proposition}\label{prop:interval-counterexample}
	There exist bounded-degree trees \(T_m\), vertices \(x_m,y_m\in T_m\), integers
	\(t_m>|V(T_m)|\), and a universal constant $c>0$, such that, writing \(n_m:=|V(T_m)|\),
	\[
	\mathbb P_{x_m}\left(t_m\le \tau_{y_m}\le t_m+n_m\right)
	\ge
	\frac{c}{\sqrt{\log n_m}},
	\qquad 
	\frac{n_m}{t_m}\asymp \frac{1}{\log n_m}.
	\]
	Consequently, \eqref{eq:interval-conjecture} cannot hold uniformly over finite
	Markov chains, even restricted to simple random walk on bounded-degree trees.
\end{proposition}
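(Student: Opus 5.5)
The plan is to take the trees $T_m$ from the proof of \cite[Claim~2.2]{norris2017surprise}, whose mean and variance are recorded in \eqref{eq:peres}, and to combine Chebyshev's inequality with a pigeonhole argument over intervals of length $n_m$. What I need from \eqref{eq:peres} are the orders of magnitude. With $n=n_m$, I expect
\[
m:=\E_{x_m}\tau_{y_m}\asymp n\log n,
\qquad
\sigma^2:=\Var_{x_m}(\tau_{y_m})\asymp \frac{m^2}{\log n}\asymp n^2\log n .
\]
These are exactly the orders that make the logarithm in Theorem~\ref{thm:general-plus-mean} sharp. Only the upper bound on the variance and the two-sided bound on the mean are used below. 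The trees are built from bounded-degree pieces, so the bounded-degree requirement should hold by construction.

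First, apply Chebyshev's inequality to get
\[
\Pp_{x_m}\bigl(|\tau_{y_m}-m|\le 2\sigma\bigr)\ge \tfrac34 .
\]
Next, cover the window $[m-2\sigma,\,m+2\sigma]$ by $K:=\lceil 4\sigma/n\rceil+1$ consecutive integer intervals of the form $[s,s+n]$. By pigeonhole, one of them, say $[t_m,t_m+n]$, satisfies
\[
\Pp_{x_m}(t_m\le\tau_{y_m}\le t_m+n)\ge \frac{3}{4K}\ge c\,\frac{n}{\sigma}\ge \frac{c'}{\sqrt{\log n}},
\]
where the last step uses $\sigma\asymp n\sqrt{\log n}$ and $\sigma\ge n$ for large $m$. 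Every such interval starts at some $t_m\ge m-2\sigma$. Since $\sigma\asymp m/\sqrt{\log n}=o(m)$, we get $t_m=(1+o(1))\,m\asymp n\log n$. Hence $t_m>n$ and $n/t_m\asymp 1/\log n$.

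For the conclusion, compare with \eqref{eq:interval-conjecture}. The conjectured upper bound is $Cn/t_m\asymp C/\log n$. This is much smaller than the lower bound $c'/\sqrt{\log n}$ once $m$ is large, so no uniform $C$ can work.

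The main obstacle is confirming that the NPZ trees have precisely these scalings: mean of order $n\log n$ with respect to the vertex count, and variance no larger than order $m^2/\log n$. If \eqref{eq:peres} records them with a different parametrization, for example in terms of a depth parameter, I will first convert to $n$. If the mean turns out to be of a different order $M$, the same argument still gives probability $\gtrsim n\sqrt{\log n}/M$ against a conjectured bound $\asymp n/M$, so only the ratio $\sqrt{\log n}$ matters. In that case the stated relation $n/t_m\asymp 1/\log n$ would need adjusting, or the tree would need padding with a bounded-degree path to tune $n$ against $M$ without changing the hitting time.
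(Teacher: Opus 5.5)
Your proposal is correct and follows the paper's proof essentially step for step. You use the Norris--Peres--Zhai trees, which have maximum degree $4$ and satisfy $\E_{x_m}\tau_{y_m}\asymp n_m\log n_m$ and $\sqrt{\Var_{x_m}(\tau_{y_m})}\lesssim n_m\sqrt{\log n_m}$, exactly the scalings in \eqref{eq:peres}. You then apply Chebyshev to trap a constant fraction of the mass in a window of width $O(n_m\sqrt{\log n_m})$, pigeonhole over $O(\sqrt{\log n_m})$ intervals of length $n_m$, and get $t_m=(1+o(1))\E_{x_m}\tau_{y_m}$. The appeal to $\sigma\ge n$ is unnecessary, since $K\le 4\sigma/n+2\lesssim\sqrt{\log n}$ follows from the variance upper bound alone; in any case Theorem~\ref{thm:general-plus-mean} supplies the matching lower bound $\sigma\gtrsim n\sqrt{\log n}$.
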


\begin{proof}
	We use the tree \(G_m\) from the proof of
	\cite[Claim~2.2]{norris2017surprise}, whose maximal degree is $4$. Set $T_m=G_m$. With the notation of that paper, let
	\(x_m=w_m\), \(y_m=w_0\), and \(n_m=|V(G_m)|\).  By
	\cite[Lemma~2.3]{norris2017surprise},
	\[
	\mathbb E_{w_m}\tau_{y_m}=\Theta(m2^{2m}),
	\qquad
	\operatorname{Var}_{w_m}(\tau_{y_m})=O(m2^{4m}).
	\]
	Moreover, the construction has
	\[
	n_m=2^{2m}-2^m-2m+2\asymp 2^{2m}.
	\]
	Equivalently,
	\begin{equation}\label{eq:peres}
	\mathbb E_{x_m}\tau_{y_m}\asymp n_m\log n_m,
	\qquad
	\sqrt{\operatorname{Var}_{x_m}(\tau_{y_m})}
	\lesssim n_m\sqrt{\log n_m}.
	\end{equation}
	By Chebyshev's inequality, there is a constant \(a>0\) such that
	\[
	\mathbb P_{x_m}\left(
	\left|\tau_{y_m}-\mathbb E_{x_m}\tau_{y_m}\right|
	\le
	a n_m\sqrt{\log n_m}
	\right)
	\ge
	\frac12.
	\]
	Thus a fixed positive amount of the mass of \(\tau_{y_m}\) lies in an interval of
	length \(O(n_m\sqrt{\log n_m})\).  Partition this interval into subintervals of
	length \(n_m\).  Since the number of such subintervals is
	\(O(\sqrt{\log n_m})\), there exists a time \(t_m\) and a constant $c>0$ such that
	\[
	\mathbb P_{x_m}\left(t_m\le \tau_{y_{y_m}}\le t_m+n_m\right)
	\ge
	\frac{c}{\sqrt{\log n_m}}.
	\]
	Moreover, the interval above is centered at $\mathbb E_{x_m}\tau_{y_m}\asymp n_m\log n_m$
	and has width only \(O(n_m\sqrt{\log n_m})\). Hence, for large \(m\),
	\[
	t_m\asymp n_m\log n_m.
	\]
	In particular \(t_m>n_m\), and
	\[
	\frac{n_m}{t_m}\asymp \frac{1}{\log n_m}.
	\]
	Therefore
	\[
	\frac{ \mathbb P_{x_m}\left(t_m\le \tau_{y_{y_m}}\le t_m+n_m\right)
	}{
		n_m/t_m
	}
	\gtrsim
	\sqrt{\log n_m}
	\to\infty.
	\]
	Thus, no universal constant $C>0$ can make \eqref{eq:interval-conjecture} hold.
\end{proof}

\bibliographystyle{plain}	
\bibliography{thesisbib}

@book {levin2017markov,
	AUTHOR = {Levin, David A. and Peres, Yuval},
	TITLE = {Markov chains and mixing times},
	NOTE = {Second edition of [ MR2466937],
	With contributions by Elizabeth L. Wilmer,
	With a chapter on ``Coupling from the past'' by James G. Propp
	and David B. Wilson},
	PUBLISHER = {American Mathematical Society, Providence, RI},
	YEAR = {2017},
	PAGES = {xvi+447},
	ISBN = {978-1-4704-2962-1},
	MRCLASS = {60J10 (60-01 60B15 60C05 60J27 60K35 68U20 82C22)},
	MRNUMBER = {3726904},
	DOI = {10.1090/mbk/107},
	URL = {https://doi.org/10.1090/mbk/107},
}

@article {gurel2013nonconcentration,
    AUTHOR = {Gurel-Gurevich, Ori and Nachmias, Asaf},
     TITLE = {Nonconcentration of return times},
   JOURNAL = {Ann. Probab.},
  FJOURNAL = {The Annals of Probability},
    VOLUME = {41},
      YEAR = {2013},
    NUMBER = {2},
     PAGES = {848--870},
      ISSN = {0091-1798},
   MRCLASS = {60J10 (05C81 60G40)},
  MRNUMBER = {3077528},
MRREVIEWER = {Elisabetta Candellero},
       DOI = {10.1214/12-AOP785},
       URL = {https://doi.org/10.1214/12-AOP785},
}

@article {norris2017surprise,
    AUTHOR = {Norris, James and Peres, Yuval and Zhai, Alex},
     TITLE = {Surprise probabilities in {M}arkov chains},
   JOURNAL = {Combin. Probab. Comput.},
  FJOURNAL = {Combinatorics, Probability and Computing},
    VOLUME = {26},
      YEAR = {2017},
    NUMBER = {4},
     PAGES = {603--627},
      ISSN = {0963-5483},
   MRCLASS = {60J10},
  MRNUMBER = {3656344},
       DOI = {10.1017/S0963548317000074},
       URL = {https://doi.org/10.1017/S0963548317000074},
}

@article {nadtochiy2019particle,
    AUTHOR = {Nadtochiy, Sergey and Shkolnikov, Mykhaylo},
     TITLE = {Particle systems with singular interaction through hitting
              times: application in systemic risk modeling},
   JOURNAL = {Ann. Appl. Probab.},
  FJOURNAL = {The Annals of Applied Probability},
    VOLUME = {29},
      YEAR = {2019},
    NUMBER = {1},
     PAGES = {89--129},
      ISSN = {1050-5164},
   MRCLASS = {82C22 (35B65 35K20 35R60 60K35 91G80)},
  MRNUMBER = {3910001},
       DOI = {10.1214/18-AAP1403},
       URL = {https://doi.org/10.1214/18-AAP1403},
}

@article {Avrachenkov2018hitting,
    AUTHOR = {Avrachenkov, Konstantin and Piunovskiy, Alexey and Zhang, Yi},
     TITLE = {Hitting times in {M}arkov chains with restart and their
              application to network centrality},
   JOURNAL = {Methodol. Comput. Appl. Probab.},
  FJOURNAL = {Methodology and Computing in Applied Probability},
    VOLUME = {20},
      YEAR = {2018},
    NUMBER = {4},
     PAGES = {1173--1188},
      ISSN = {1387-5841},
   MRCLASS = {60J05 (60J20 90B18)},
  MRNUMBER = {3873621},
       DOI = {10.1007/s11009-017-9600-5},
       URL = {https://doi.org/10.1007/s11009-017-9600-5},
}

@article {Balka2009review,
    AUTHOR = {Balka, Jeremy and Desmond, Anthony F. and McNicholas, Paul D.},
     TITLE = {Review and implementation of cure models based on first
              hitting times for {W}iener processes},
   JOURNAL = {Lifetime Data Anal.},
  FJOURNAL = {Lifetime Data Analysis. An International Journal Devoted to
              Statistical Methods and Applications for Time-to-Event Data},
    VOLUME = {15},
      YEAR = {2009},
    NUMBER = {2},
     PAGES = {147--176},
      ISSN = {1380-7870},
   MRCLASS = {62P10 (60J65)},
  MRNUMBER = {2510862},
       DOI = {10.1007/s10985-008-9108-y},
       URL = {https://doi.org/10.1007/s10985-008-9108-y},
}

\end{document}